\newcommand{\RR}{\mathbb{R}}
\newcommand{\E}{\mathcal E}
\newcommand{\kal}[1]{\widehat{#1}}
\newcommand{\nul}{\setminus\{0\}}
\newcommand{\uhz}{\mathcal{U}}
\newcommand{\lin}{\mathrm{L}}
\newcommand{\kr}{\mathrm{Ker}}
\newcommand{\vm}{\mathfrak{X}}
\newcommand{\vl}{^\mathsf{v}}
\newcommand{\tl}{^\mathsf{c}}
\newcommand{\hl}{^\mathrm{h}}
\newcommand{\Eh}{\mathcal{H}}
\newcommand{\Ve}{\mathcal{V}}
\newcommand{\hh}{\mathbf{h}}
\newcommand{\splt}{\mathring{T}}
\newcommand{\splp}{\mathring{\pi}}
\newcommand{\lie}{\mathcal{L}}
\newcommand{\hll}[1]{\widetilde{#1}}
\newcommand\pr{\mathop{\mathrm{pr}}\nolimits}
\newcommand\Sec{\mathop{\mathrm{Sec}}\nolimits}
\newcommand\vlift{\mathop{\mathrm{vl}}\nolimits}
\newcommand{\parcv}[2]{\frac{\partial#1}{\partial#2}}
\newcounter{unit}
\newenvironment{unit}[1][\arabic{section}.\arabic{unit}]{\addtocounter{unit}1\begin{trivlist}
\item[\hskip \labelsep {\bfseries #1.}]}{\end{trivlist}}
\newtheorem{fact}{FACT}
\newtheorem{lemma}{Lemma}
\newtheorem{claim}{Claim}
\theoremstyle{definition}
\newtheorem{prop}[lemma]{Proposition}
\newtheorem{cor}[lemma]{Corollary}
\newtheorem*{remark}{Remark}
\theoremstyle{remark}
\newtheorem*{coorddesc}{Coordinate description}
\newtheorem*{note}{Note}
\def\fvecfill{$\m@th\hbox{\raisebox{-5.75pt}[1.5pt][0pt]{$\!\mathord\ulcorner$}}
\mkern-5mu
\cleaders\hbox{\raisebox{-0.5pt}[1.5pt][0pt]{$\!\mathord-$}}\hfill
\mkern-5mu
\mathord{\hbox{\raisebox{-5.75pt}[1.5pt][0pt]{$\!\mathord\urcorner\!$}}}$}
\def\fentkapcs { \overset{\hbox{\fvecfill}} } 
\begin{document}

\title{Ten ways to Berwald manifolds -- and some steps beyond\thanks{The first two authors were supported by National Science Research Foundation OTKA No.\ NK 81402.}}
\author{J.~Szilasi\and R.~L.~Lovas\and D.~Cs.~Kert\'esz}
\date{}
\maketitle

\begin{abstract}
After summarizing some necessary preliminaries and tools, including Berwald derivative and Lie derivative in pull-back formalism, we present ten equivalent conditions, each of which characterizes Berwald manifolds among Finsler manifolds. These range from Berwald's classical definition to the existence of a torsion-free covariant derivative on the base manifold compatible with the Finsler function and Aikou's characterization of Berwald manifolds. Finally, we study some implications of V.~Matveev's observation according to which quadratic convexity may be omitted from the definition of a Berwald manifold. These include, among others, a generalization of Z.~I.~Szab\'o's well-known metrization theorem, and leads also to a natural generalization
of Berwald manifolds, to \emph{Berwald\,--\,Matveev manifolds}.
\end{abstract}

AMS 2010 Mathematics Subject Classification: 53B05, 53B40.

Key words and phrases: Berwald manifold, Ehresmann connection, parallel translation, averaged metric construction, Loewner ellipsoid.

\section{Introduction}
Positive definite Berwald manifolds constitute the conceptually simplest and the best understood class of Finsler manifolds. Their conceptual simplicity is due to the fact that Berwald manifolds are affinely connected manifolds at the same time, whose parallelism structure is related to the normed vector space structure of the tangent spaces in the most natural manner: parallel translations are norm preserving. Berwald himself called a Finsler manifold an `affinely connected space' if
\begin{enumerate}
\item[$(B{})$] \emph{the connection parameters arising from the geodesic equation are independent of the direction arguments.}
\end{enumerate}
It turns out that the affine connection of a Berwald manifold is the Levi-Civita connection of a Riemannian metric on the base manifold. This key observation of Z.~I.~Szab\'o is the starting point of his structure theorem on Berwald manifolds \cite{Szabo}.

Our decision to write a comprehensive survey concerning Berwald's condition $(B{})$ was strongly motivated by some e-mails between Vladimir Matveev and the first author. A quotation from a letter of Matveev:

`I always thought that a Finsler manifold is Berwald \emph{if and only if} there exists a torsion-free affine connection whose transport preserves the Finsler function $F$. Is the statement correct? If yes, do you have a reference where it is written?

Of course I understand that a Berwald metric (in a standard definition) does have the above property: indeed, in this case the Chern connection is actually an affine connection, and it preserves the Finsler function $F$. Thus, my question is essentially whether the existence of an affine connection preserving the Finsler function implies that the metric is Berwald\dots'
He also mentioned that the question had appeared in a discussion with Marc Troyanov.

Our answer was affirmative. We did not find, however, any reference where the statement was formulated explicitly and proved in a simple and self-contained way.

To our request Matveev also sketched a possible proof, found by him and Troyanov. Although their argumentation was not elaborated in every detail, we found it nice and original. We thought, however, that it used rather heavy tools from Riemannian geometry to a quite simple problem, and depended too strongly on the assumption of positive definiteness.

Since the question is natural and important, we believed it useful to present a proof which is self-contained as possible, and which uses only the simplest tools of Finsler geometry (and connection theory). Moreover, besides the property formulated by Matveev, we present nine other properties that characterize Berwald manifolds among Finsler manifolds. We do this partly for the sake of completeness, since some other properties are folklore, and some others are easily accessible (see, e.g., ref \cite{SzV}). On the other hand, in several problems, it is advantageous to have an appropriate version of Berwaldian property $(B{})$.

We wrote this paper not only, or not primarily, for Finsler geometers, and we hope that anyone with a basic knowledge of differential geometry can fairly easily read it and will find it indeed useful. So in sections 2--4 we collect the most necessary preparatory material concerning sprays, Ehresmann connections, (nonlinear) parallel translations, and some basic facts on Finsler functions. In section 5 we formulate and prove nine equivalents of $(B{})$; the first of them is just a more precise reformulation of the relevant property. In section 6 we present a detailed, index-free proof of a further nice and important characterization of Berwald manifolds among Finsler manifolds, discovered by T.~Aikou \cite{Aikou}. To do this, we need a concept of `Lie derivative along the tangent bundle projection', which we also explain here.

In section 7 we leave the realm of classical Berwald manifolds. In references \cite{M1,MRTZ,M2} Matveev and his collaborators drew attention to the fact that the metrization of the affine connection of a Berwald manifold discovered by Z.~I.~Szab\'o may be carried out in a more general setting. Namely, the quadratic convexity of a Finsler function assured by our conditions $(F_1)$--$(F_4)$ in section 4, may be weakened substantially. This
observation leads to the less restrictive notion of \emph{Berwald\,--\,Matveev
manifold}.  
We devote the greater part of the concluding section to the averaged metric construction explained first in \cite{MRTZ} (and later also in \cite{M1}), and to a completion of the proof of Theorem 1 in Matveev's paper \cite{M1}. In our arguments we utilize a trick which we learnt from Matveev during an after-lunch conversation in Debrecen, April 2011. Finally, we exhibit a further method to associate a Riemannian metric to a Berwald\,--\,Matveev manifold in a natural way, applying Loewner ellipsoids.

\section{Notations and definitions}
\begin{unit}
Throughout the paper, $M$ will be an $n$-dimensional ($n\geq1$) smooth manifold whose underlying topological space is Hausdorff, second countable and connected. $C^\infty(M)$ is the real algebra of smooth functions on $M$.
\end{unit}

\begin{unit}
The tangent bundle of $M$ will be denoted by $\tau:TM\rightarrow M$. Analogously, $\tau_{TM}:TTM\rightarrow TM$ will stand for the tangent bundle of the tangent manifold $TM$. The shorthand for these vector bundles will be $\tau$ and $\tau_{TM}$, respectively. The vector fields on $M$ form a $C^\infty(M)$-module, which will be denoted by $\mathfrak X(M)$. $o\in\mathfrak X(M)$ is the zero vector field on $M$. The \emph{deleted bundle} for $\tau$ is the fibre bundle $\mathring\tau:\mathring TM\rightarrow M$, where $\mathring TM:=TM\setminus o(M)$, $\mathring\tau:=\tau\upharpoonright\mathring TM$. The Lie bracket $[X,Y]$ of $X,Y\in\mathfrak X(M)$ is the unique vector field on $M$ satisfying
\[[X,Y](f)=X(Yf)-Y(Xf),\quad f\in C^\infty(M).\]
\end{unit}

\begin{unit}
If $\varphi:M\rightarrow N$ is a smooth mapping between smooth manifolds, its derivative is $\varphi_*:TM\rightarrow TN$. Two vector fields $X\in\mathfrak X(M)$ and $Y\in\mathfrak X(N)$ are $\varphi$-related if $\varphi_*\circ X=Y\circ\varphi$; in this case we write $X\raisebox{-2pt}[0pt][-7pt]{$\begin{array}{c}\sim\\[-10pt]\scriptstyle\varphi\end{array}$} Y$. A vector field $\xi$ on $TM$ is \emph{vertical} if $\xi\raisebox{-2pt}[0pt][-7pt]{$\begin{array}{c}\sim\\[-10pt]\scriptstyle\tau\end{array}$}o$. The vertical vector fields form a (finitely generated) module, denoted by $\mathfrak X^{\mathsf v}(TM)$, over the ring $C^\infty(TM)$, which is also a subalgebra of the real Lie algebra $\mathfrak X(TM)$.
\end{unit}

\begin{unit}\label{liouv}
The \emph{vertical lift} of a function $f\in C^\infty(M)$ in $C^\infty(TM)$ is $f^{\mathsf v}:=f\circ\tau$, the \emph{complete lift} of $f$ is $f^{\mathsf c}\in C^\infty(TM)$ defined by
\[f^{\mathsf c}(v):=v(f),\textrm{\emph{ if }} v\in TM.\]
There exists a canonical vertical vector field $C$ on $TM$ such that
\[Cf^{\mathsf c}:=f^{\mathsf c},\textrm{\emph{ for all }} f\in C^\infty(M).\]
$C$ is said to be the \emph{Liouville vector field} (or \emph{radial vector field}) on $TM$.
\end{unit}

\begin{unit}
Let $X$ be a vector field on $M$. The \emph{vertical lift} $X^{\mathsf v}\in\mathfrak X^{\mathsf v}(TM)$ of $X$ is the unique vertical vector field on $TM$ such that
\[X^{\mathsf v}f^{\mathsf c}=(Xf)^{\mathsf v}\textrm{\emph{ for all }} f\in C^\infty(M).\]
The \emph{complete lift} $X^{\mathsf c}\in\mathfrak X(TM)$ of $X$ is the unique vector field on $TM$ such that
\[X^{\mathsf c}f^{\mathsf c}=(Xf)^{\mathsf c},\quad X^{\mathsf c}f^{\mathsf v}=(Xf)^{\mathsf v}\textrm{\emph{ for all }} f\in C^\infty(M).\]
If $(X_i)_{i=1}^n$ is a local frame for $TM$, then $\big(X_i^{\mathsf v},X_i^{\mathsf c}\big)_{i=1}^n$ is a local frame for $TTM$, therefore
\begin{quote}
\emph{in order to define a tensor field on $TM$, it is sufficient to specify its action on vertical and complete lifts of vector fields on $M$.}
\end{quote}
Thus there exists a unique type $(1,1)$ tensor field $\mathbf J$ on $TM$ such that
\[\mathbf J X^{\mathsf v}=0,\quad \mathbf JX^{\mathsf c}=X^{\mathsf v}\textrm{\emph{ for all }} X\in \mathfrak X(M).\]
$\mathbf J$ is said to be the \emph{vertical endomorphism} of $\mathfrak X(TM)$ (or of $TTM$).
\end{unit}

\begin{unit}
$d$ denotes the operator of exterior derivative, defined on a function \\$f\in C^\infty(M)$ and a $1$-form $\omega\in\mathfrak X^*(M)$ by
\[df(X):=Xf\quad\textrm{\emph{and}}\quad d\omega(X,Y):=X\omega(Y)-Y\omega(X)-\omega\big([X,Y]\big)\]
$\big(X,Y\in\mathfrak X(M)\big)$.

The substitution operator $i_X$, associated to a vector field $X\in\mathfrak X(M)$, acts on a type $(0,k)$ or $(1,k)$ ($k\in\mathbb N\setminus\{0\}$) tensor field $A$ on $M$ by
\[i_XA(X_1,\dots,X_{k-1}):=A(X,X_1,\dots,X_{k-1}).\]
\end{unit}

\begin{unit} To any type $(1,1)$ tensor field
\[A\in\mathcal T_1^1(M)\cong \mathop{\mathrm{End}}\big(\mathfrak X(M)\big)\]
we associate a vertical vector field $\overline A\in\mathfrak X^{\mathsf v}(TM)$, by prescribing its action on the complete lifts of smooth functions on $M$ by
\[\big(\overline Af^{\mathsf c}\big)(v):=A_{\tau(v)}(v)f;\quad f\in C^\infty(M),\ v\in TM.\]
Then we have
\[\big[C,\overline A\,\big]=0.\]
\end{unit}

\begin{unit}
By a \emph{semispray} for $M$ we mean a mapping
\[S:TM\longrightarrow TTM\]
satisfying the following conditions:
\begin{enumerate}[($S_1$)]
\item $\tau_{TM}\circ S=1_{TM}$.
\item $S$ is of class $C^1$ on $TM$, smooth on $\mathring TM$.
\item $\mathbf JS=C$.
\end{enumerate}
A semispray is called a \emph{spray} if it also satisfies
\begin{enumerate}
\item [($S_4$)] $[C,S]=S$.
\end{enumerate}
If a spray is of class $C^2$ (and hence smooth) on $TM$, we speak of an \emph{affine spray}.
\end{unit}

\begin{unit}
If $(\mathcal U,u)=\big(\mathcal U,(u^i)_{i=1}^n\big)$ is a chart on $M$, then
\[\big(\tau^{-1}(\mathcal U),(x^i,y^i)_{i=1}^n\big),\quad x^i:=(u^i)^{\mathsf v},\ y^i:=(u^i)^{\mathsf c}\]
is a chart on $TM$, called the \emph{chart induced by} $(\mathcal U,u)$. In our (not too frequent) coordinate calculations \emph{Einstein's summation convention} will be applied: any index occurring twice, once up, once down, is summed over.
\end{unit}

To conclude this section, we present the coordinate expressions of some objects introduced above.
\begin{enumerate}[(i)]
\item If $\xi\in\mathfrak X^{\mathsf v}(TM)$, then
\[\xi\upharpoonright\tau^{-1}(\mathcal U)=\xi^{n+i}\frac{\partial}{\partial y^i},\quad \xi^{n+i}\in C^\infty\big(\tau^{-1}(\mathcal U)\big).\]
\item In the induced coordinates, the Liouville vector field takes the form
    \[C\upharpoonright\tau^{-1}(\mathcal U)=y^i\frac{\partial}{\partial y^i}.\]
\item If $f\in C^\infty(\mathcal U)$, its complete lift is
\[f^{\mathsf c}=y^i\left(\frac{\partial f}{\partial u^i}\circ\tau\right)=(u^i)^{\mathsf c}\left(\frac{\partial f}{\partial u^i}\right)^{\mathsf v}.\]
\item If $X\in\mathfrak X(M)$, $X\upharpoonright\mathcal U =X^i\frac{\partial }{\partial u^i}$, then
\begin{align*}
X^{\mathsf v}\upharpoonright\tau^{-1}(\mathcal U)&=\big(X^i\circ\tau\big)\frac{\partial}{\partial y^i},\\
X^{\mathsf c}\upharpoonright\tau^{-1}(\mathcal U)&=\big(X^i\circ\tau\big)\frac{\partial}{\partial x^i}+y^j\left(\frac{\partial X^i}{\partial u^j}\circ\tau\right)\frac{\partial}{\partial y^i}.
\end{align*}
\item If $A\in\mathcal T_1^1(M)$, $A\left(\frac{\partial}{\partial u^j}\right)=A_j^i\frac{\partial}{\partial u^i}$ ($j\in\{1,\dots,n\}$), then
\[\overline A\upharpoonright\tau^{-1}(\mathcal U)=y^j\left(A^i_j\circ\tau\right)\frac{\partial}{\partial y^i}.\]
\item If $S:TM\rightarrow TTM$ is a semispray, then \[S\upharpoonright\tau^{-1}(\mathcal U)=y^i\frac{\partial}{\partial x^i}-2G^i\frac{\partial}{\partial y^i},\]
    where the functions $G^i:\tau^{-1}(\mathcal U)\rightarrow\mathbb R$ are of class $C^1$, and smooth on $\mathring\tau^{-1}(\mathcal U)$.
\end{enumerate}
\section{Ehresmann connections and parallel translations}
\setcounter{unit}0
\begin{unit}
Consider the vector bundle
\[
\pi:TM\times_M TM\to TM,
\]
where $TM\times_M TM:=\{(u,v)\in TM\times TM\mid\tau(u)=\tau(v)\}$, and $\pi$ is the restriction of the canonical projection $\pr_1:TM\times TM\to TM$, $(u,v)\mapsto u$ onto $TM\times_M TM$.

In terms of the theory of bundles, $\pi$ is just the pull-back of the tangent bundle $\tau:TM\rightarrow M$ by $\tau$. The $C^\infty(TM)$-module of sections of $\pi$ will be denoted by $\Sec(\pi)$. For any vector field $X$ on $M$, the mapping
\[\kal X:v\in TM\longmapsto \kal X(v):=\big(v,X(\tau(v))\big)\in TM\times_MTM\]
is a section of $\pi$, called a \emph{basic section}.
Basic sections generate the module $\Sec(\pi)$ in the sense that locally any section in $\Sec(\pi)$ can be obtained as a $C^\infty(TM)$-linear combination of basic sections. In particular, if $\left(\mathcal U,\left(u^i\right)_{i=1}^n\right)$ is a chart of $M$, then $\left(\widehat{\frac\partial{\partial u^i}}\right)_{i=1}^n$ is a frame of $TM\times_M TM$ over $\tau{}^{-1}(\mathcal U)$.

We have a canonical $C^\infty(TM)$-linear isomorphism
\[
\vlift:\Sec(\pi)\to\mathfrak X^{\mathsf v}(TM),
\]
called the \emph{vertical lift}, given on the basic sections by
\[
\vlift\big(\widehat X\big):=X^{\mathsf v},\ X\in\mathfrak X(M).
\]

We shall also need the pull-back of the tangent bundle $\tau: TM\rightarrow M$ by the mapping $\mathring\tau:\splt M\rightarrow M$; this is the vector bundle
\[\splp:\splt M\times_MTM\rightarrow\splt M.\]
We write $\Sec(\splp)$ for the $C^\infty(\splt M)$-module of its sections. As before, any vector field on $M$ determines a basic section in $\Sec(\splp)$.

It will be useful to extend the derivative $\tau_*:TTM\rightarrow TM$ of $\tau$ into a mapping
\[\hll\tau_*:TTM\longrightarrow TM\times_MTM\]
given by
\[\hll\tau_*(w):=\big(v,(\tau_*)_v(w)\big),\textrm{ \emph{if} } w\in T_vTM.\]
\end{unit}

\begin{unit}
By an \emph{Ehresmann connection in $TM$} we mean a mapping
\[
\mathcal H:TM\times_M TM\to TTM
\]
satisfying the following conditions:
\begin{enumerate}[(C1)]
\item
$\mathcal H$ is fibre-preserving and fibrewise linear, i.e., for each $v\in TM$, $w,w_1,w_2\in T_{\tau(v)}M$, $\lambda_1,\lambda_2\in\mathbb R$,
\[
\mathcal H(v,w)\in T_vTM,
\]
and
\[
\mathcal H(v,\lambda_1w_1+\lambda_2w_2)
=\lambda_1\mathcal H(v,w_1)+\lambda_2\mathcal H(v,w_2).
\]
\item
$\hll\tau_*\circ\Eh=1_{TM\times_MTM}$, or equivalently, $(\tau_*)_v\big(\Eh(v,w)\big)=w$, \emph{for all}\\ $v\in TM$, $w\in T_vTM$.
\item
$\mathcal H$ is smooth over $\mathring TM\times_M TM$.
\item
For each $p\in M$ and $v\in T_pM$, $\mathcal H(o(p),v)=(o_*)_p(v).$
\end{enumerate}

If
\[\Eh_v:=\Eh\upharpoonright \{v\}\times T_{\tau(v)}M\quad (v\in\splt M),\]
then by (C1) and (C2) $\Eh_v$ is an injective linear mapping of \\ $T_{\tau(v)}M\cong\{v\}\times T_{\tau(v)}M$ into $T_v\splt M$, therefore
\[H_vTM:=\mathop{\textrm{Im}}(\Eh_v)\]
is an $n$-dimensional subspace of $T_v\splt M$, called the \emph{horizontal subspace} of $T_v\splt M$ with respect to $\Eh$. If
\[V_v\splt M:=\mathop{\textrm{Ker}}(\tau_*)_v\]
is the (canonical) vertical subspace of $T_v\splt M$, then we have the direct decomposition
\[T_v\splt M=V_v\splt M\oplus H_v\splt M.\]

The mapping
\[\hh:=\Eh\circ\hll\tau_*:TTM\longrightarrow TTM\]
is a projection operator: fibrewise linear and $\hh^2=\hh$. $\hh$ is called the \emph{horizontal projection} associated to $\Eh$.

The \emph{horizontal lift} of a vector field $X$ on $M$ with respect to $\mathcal H$ (or the \emph{\mbox{$\mathcal H$-horizontal} lift}, briefly the horizontal lift of $X$) is the vector field \\
$X^{\mathrm h}\in\mathfrak X\big(\mathring TM\big)$ defined by
\[
X^{\mathrm h}(v):=\mathcal H(v,X(\tau(v))),\ v\in\mathring TM.
\]

If $(X_i)_{i=1}^n$ is a local frame of $TM$, then $(X_i\vl,X_i\hl)_{i=1}^n$ is a local frame of $T\splt M$ (cf. 2.5). We define a $C^\infty(\splt M)$-linear mapping
\[\Ve:\vm(\splt M)\longrightarrow\Sec(\splp),\]
called the \emph{vertical mapping} associated to $\Eh$, specifying its action on the vertical and horizontal lifts of vector fields on $M$ by
\[\Ve(X\vl)=\kal X,\quad \Ve(X\hl)=0;\quad X\in\vm(M).\]

An Ehresmann connection $\mathcal H$ is said to be \emph{homogeneous} if $\left[X^{\mathrm h},C\right]=0$ for all $X\in\mathfrak X(M)$; \emph{torsion-free} if $\left[X^{\mathrm h},Y^{\mathsf v}\right]-\left[Y^{\mathrm h},X^{\mathsf v}\right]-[X,Y]^{\mathsf v}=0$ for all $X,Y\in\mathfrak X(M)$.

By a \emph{linear connection} on $TM$ (or, by an abuse of language, on $M$) we mean a homogeneous Ehresmann connection which is of class $C^1$ (and hence smooth) over $TM\times_M TM$. The motivation of this terminology will be clear from the coordinate description below.

\begin{remark}
We draw a definite distinction between an Ehresmann connection and a (possibly nonlinear) covariant derivative operator in Koszul's sense, although they are two sides of the same coin.
\end{remark}

\begin{coorddesc}
Let $\mathcal H$ be an Ehresmann connection in $TM$. Specify a chart $\left(\mathcal U,\left(u^i\right)_{i=1}^n\right)$ of $M$, and consider the induced chart $\left(\tau^{-1}(\mathcal U),\left(x^i,y^i\right)_{i=1}^n\right)$ of $TM$. $\mathcal H$ determines unique functions
\[
N^i_j:\tau^{-1}(\mathcal U)\to\mathbb R,\ i,j\in\{1,\dots,n\},
\]
smooth on $\tau^{-1}(\mathcal U)\cap\mathring TM$, such that for each $j\in\{1,\dots,n\}$,
\[
\bigg(\frac{\partial}{\partial u^j}\bigg)\hl(v)=
\left(\frac\partial{\partial x^j}\right)_v
-N^i_j(v)\left(\frac\partial{\partial y^i}\right)_v.
\]
They are called the \emph{Christoffel symbols} or the \emph{connection parameters} for $\mathcal H$ with respect to the given charts. If $\mathcal H$ is homogeneous, the $N^i_j$'s are positive-homogeneous of degree 1; if $\mathcal H$ is linear, they can be written in the form
\[
N^i_j=\left(\Gamma^i_{jk}\circ\tau\right)y^k,
\ \Gamma^i_{jk}\in C^\infty(\mathcal U),
\]
i.e., they are linear functions on each tangent space.

For the vertical mapping associated to $\Eh$ we obtain
\begin{align*}
\Ve\bigg(\parcv{}{x^j}\bigg)&=\Ve\bigg(\bigg(\parcv{}{u^j}\bigg)\hl+N^i_j\bigg(\parcv{}{u^i}\bigg)\vl\bigg)=N^i_j\kal{\parcv{}{u^i}},\\
\Ve\bigg(\parcv{}{y^j}\bigg)&=\kal{\parcv{}{u^j}}\quad \big(j\in\{1,\dots,n\}\big).
\end{align*}
\end{coorddesc}
\end{unit}

\begin{unit}
We recall two basic examples of constructing an Ehresmann connection.
\begin{enumerate}[(a)]
\item
\emph{Crampin's construction.} Any semispray $S$ for $M$ induces a torsion-free Ehresmann connection such that
\[
X^{\mathrm h}=\frac 12(X^{\mathsf c}+[X^{\mathsf v},S])\mbox{ \emph{for all} }X\in\mathfrak X(M).
\]
\item
\emph{Ehresmann connection from a covariant derivative.} Let $D$ be a covariant derivative operator on $M$. There exists a unique Ehresmann connection $\mathcal H_D$ in $TM$ such that the horizontal lift $X^{\mathrm h_D}$ of a vector field on $M$ with respect to $\mathcal H_D$ is given by
\[
X^{\mathrm h_D}=X^{\mathsf c}-\overline{DX},
\]
where $\overline{DX}\in\mathfrak X^{\mathsf v}(TM)$ is the vertical vector field constructed from the covariant differential $DX$, as described in 2.7. Since $[X^{\mathsf c},C]=0$ and $\left[\overline{DX},C\right]=0$, it follows that $\mathcal H_D$ is a homogeneous Ehresmann connection. If $\left(\mathcal U,\left(u^i\right)_{i=1}^n\right)$ is a chart on $M$, and the Christoffel symbols of $D$ on $\mathcal U$ are $\Gamma^i_{jk}\in C^\infty(\mathcal U)$, then
\[
\left(\frac\partial{\partial u^k}\right)^{\mathrm h_D}
=\frac\partial{\partial x^k}
-\left(\Gamma^i_{jk}\circ\tau\right)y^j\frac\partial{\partial y^i},
\]
so $\mathcal H_D$ is a linear connection. An easy calculation shows that
\[
\left[X^{\mathsf v},\overline{DY}\right]=(D_XY)^{\mathsf v}\mbox{ \emph{for all} }X,Y\in\mathfrak X(M).
\]
Thus
\begin{align*}
&\left[X^{\mathrm h_D},Y^{\mathsf v}\right]-\left[Y^{\mathrm h_D},X^{\mathsf v}\right]-[X,Y]^{\mathsf v}
=[X^{\mathsf c},Y^{\mathsf v}]-[Y^{\mathsf c},X^{\mathsf v}]-[X,Y]^{\mathsf v}\\
&\quad-\left[\overline{DX},Y^{\mathsf v}\right]+\left[\overline{DY},X^{\mathsf v}\right]
=[X,Y]^{\mathsf v}-[Y,X]^{\mathsf v}-[X,Y]^{\mathsf v}\\
&\quad+(D_YX)^{\mathsf v}-(D_XY)^{\mathsf v}=-(D_XY-D_YX-[X,Y])^{\mathsf v},
\end{align*}
therefore $\mathcal H_D$ is torsion-free if, and only if, $D$ is torsion-free.
\end{enumerate}
\end{unit}

\begin{unit}
Let a \emph{homogeneous} Ehresmann connection $\mathcal H$ be specified in $TM$. Let $I$ be an open interval containing 0. Consider a (smooth) curve $\gamma:I\to M$ and a vector field $X:I\to TM$ along $\gamma$ (then $\tau\circ X=\gamma$). $X$ is said to be \emph{parallel} along $\gamma$ with respect to $\mathcal H$ (\emph{$\mathcal H$-parallel}, or simply parallel) if
\[
\dot X(t)=\mathcal H(X(t),\dot\gamma(t))\mbox{ \emph{for all} }t\in I,
\]
briefly, if $\dot X=\mathcal H(X,\dot\gamma)$.

We note that if $\gamma:I\rightarrow M$ is an integral curve of a vector field $Z\in\vm(M)$, and $X:I\rightarrow TM$ is  $\Eh$-parallel along $\gamma$, then $X$ is an integral curve of $Z\hl$, i.e., $\dot X=Z\hl\circ X$.

Indeed, at any point $t\in I$, we have on the one hand
\[\dot X(t)=\Eh\big(X(t),\dot\gamma(t)\big)=\Eh\big(X(t),Z(\gamma(t))\big);\]
on the other hand
\[Z\hl\big(X(t)\big):=\Eh\big(X(t),Z(\tau(X(t)))\big)=\Eh\big(X(t),Z(\gamma(t))\big).\]

The general existence and uniqueness theorem for solutions of homogeneous ODEs (see e.g.\ \cite{Barthel}) guarantees that, for any tangent vector $v\in\mathring T_{\gamma(0)}M$, there exists a unique parallel vector field $X$ along $\gamma$ such that $X(0)=v$. If $t\in I$, the mapping
\[
(P_\gamma)_0^t:T_{\gamma(0)}M\to T_{\gamma(t)}M,
\ v\mapsto(P_\gamma)_0^t(v):=X(t)
\]
is called \emph{parallel translation along $\gamma$} from $p=\gamma(0)$ to $q=\gamma(t)$. $(P_\gamma)_0^t$ is a positive-homogeneous diffeomorphism from $\mathring T_pM$ to $\mathring T_qM$.

By an ($\Eh$-) \emph{horizontal lift of a (smooth)} curve $\gamma:I\rightarrow M$ we mean a curve
\[\gamma\hl:I\longrightarrow \splt M\]
such that
\[\tau\circ\gamma\hl=\gamma \textrm{\emph{ and }}\dot\gamma\hl(t)\in H_{\gamma\hl(t)}\splt M\textrm{\emph{ for each }}t\in I.\]
\begin{lemma}\label{lemparal}
If $\gamma\hl$ is an $\Eh$-horizontal lift of a curve $\gamma:I\rightarrow M$, then $\gamma\hl$ is parallel along $\gamma$, i.e.,
\[\dot\gamma\hl(t)=\Eh\big(\gamma\hl(t),\dot\gamma(t)\big),\quad t\in I.\]
\end{lemma}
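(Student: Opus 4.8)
The plan is to read off the conclusion from the two defining properties of a horizontal lift together with the fact, recorded in 3.2, that for each $v\in\splt M$ the restriction $\Eh_v$ is an injective linear map of $\{v\}\times T_{\tau(v)}M\cong T_{\tau(v)}M$ onto $H_v\splt M$, whose inverse is provided by $\hll\tau_*$. Fix $t\in I$ and set $v:=\gamma\hl(t)$; note $\tau(v)=\gamma(t)$. By the second requirement on a horizontal lift, $\dot\gamma\hl(t)\in H_v\splt M=\mathop{\textrm{Im}}(\Eh_v)$, so there is a \emph{unique} $w\in T_{\gamma(t)}M$ with $\dot\gamma\hl(t)=\Eh(v,w)$. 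The whole matter is then to identify $w$ with $\dot\gamma(t)$.

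To do this I would apply $\hll\tau_*$ to both sides of $\dot\gamma\hl(t)=\Eh(v,w)$. On the one hand, by (C2) (in the form $\hll\tau_*\circ\Eh=1_{TM\times_MTM}$) we get $\hll\tau_*\big(\Eh(v,w)\big)=(v,w)$. On the other hand, by the definition of $\hll\tau_*$ given in 3.1, $\hll\tau_*\big(\dot\gamma\hl(t)\big)=\big(v,(\tau_*)_v(\dot\gamma\hl(t))\big)$. Comparing the second components yields $w=(\tau_*)_{\gamma\hl(t)}\big(\dot\gamma\hl(t)\big)$.

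Finally I would evaluate this last derivative by the chain rule: differentiating the identity $\tau\circ\gamma\hl=\gamma$ at $t$ gives $(\tau_*)_{\gamma\hl(t)}\big(\dot\gamma\hl(t)\big)=\dot\gamma(t)$, hence $w=\dot\gamma(t)$ and $\dot\gamma\hl(t)=\Eh\big(\gamma\hl(t),\dot\gamma(t)\big)$; since $t\in I$ was arbitrary, this is exactly the assertion. There is no real obstacle here; the only point demanding a moment's care is that the second hypothesis on $\gamma\hl$ locates $\dot\gamma\hl(t)$ inside $\mathop{\textrm{Im}}(\Eh_v)$ so that the preimage $w$ exists and, by injectivity of $\Eh_v$, is unique, while the first hypothesis $\tau\circ\gamma\hl=\gamma$ is precisely what forces that preimage to be $\dot\gamma(t)$.
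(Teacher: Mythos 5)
Your proposal is correct and follows essentially the same route as the paper: both arguments rest on the observation that a horizontal vector is recovered from its $\hll\tau_*$-image via $\Eh$ (you phrase this through the unique preimage $w$ under the injection $\Eh_v$ and condition (C2), the paper through the identity $\hh=\Eh\circ\hll\tau_*$ acting as the identity on $H_{\gamma\hl(t)}\splt M$), combined with the chain rule applied to $\tau\circ\gamma\hl=\gamma$ to identify that image with $\dot\gamma(t)$. Nothing is missing.
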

\begin{proof}
Since
\begin{align*}
\dot\gamma(t)&=(\gamma_*)_t\bigg(\frac{d}{dr}\bigg)_t=\big((\tau\circ\gamma\hl)_*\big)_t\bigg(\frac{d}{dr}\bigg)_t=(\tau_*)_{\gamma\hl(t)}\big((\gamma\hl)_*\big)_t\bigg(\frac{d}{dr}\bigg)_t=\\
&=(\tau_*)_{\gamma\hl(t)}\dot\gamma\hl(t),
\end{align*}
we obtain
\[\dot\gamma\hl(t)=\hh\big(\dot\gamma\hl(t)\big)=\Eh\big(\hll\tau_*(\dot\gamma\hl(t))\big)=\Eh\big(\gamma\hl(t),(\tau_*)_{\gamma\hl(t)}\dot\gamma\hl(t)\big)=\Eh\big(\gamma\hl(t),\dot\gamma(t)\big).\]
\end{proof}
\begin{lemma}\label{lemflow}
Let $X$ be a vector field on $M$, and let
\[\varphi:W\subset\mathbb R\times M\longrightarrow M,\quad (t,p)\longmapsto \varphi(t,p)\]
be the flow generated by $X$. If\/ $\Eh$ is an Ehresmann connection in $TM$, then the flow generated by the $\Eh$-horizontal lift $X\hl$ of $X$ is the mapping
\[\varphi\hl:\hll W\longrightarrow \splt M,\ (t,v)\longmapsto \varphi\hl(t,v):=\varphi_{v}\hl(t),\]
where
\[\hll W:=\big\{(t,v)\in\mathbb R\times\splt M\big|(t,\tau(v))\in W\big\},\]
and for any fixed $v\in\splt M$, $\varphi_{v}\hl$ is the horizontal lift of the curve $\varphi_{\tau(v)}$ defined by $\varphi_{\tau(v)}(t):=\varphi(t,\tau(v))$, starting from $v$.
\end{lemma}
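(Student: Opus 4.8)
The plan is to verify the defining property of a flow: that $\varphi\hl$ is smooth (on an appropriate domain), that $\varphi\hl(0,v)=v$ for every $v\in\splt M$, and that for each fixed $v$ the curve $t\mapsto\varphi\hl(t,v)$ is an integral curve of $X\hl$. The first two points are essentially bookkeeping: $\varphi_v\hl$ is by construction the $\Eh$-horizontal lift of $\varphi_{\tau(v)}$ starting at $v$, so $\varphi_v\hl(0)=v$; smoothness in $(t,v)$ jointly follows from the smooth dependence on initial conditions for the linear (homogeneous) ODE system defining horizontal lifts, together with the smooth dependence of $\varphi_{\tau(v)}$ on $\tau(v)$, which is the flow property of $X$ itself. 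The maximality of the domain $\hll W$ also comes from the ODE existence interval matching that of the base flow, since the horizontal lift exists exactly as long as the base curve does.

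The heart of the argument is showing $\tfrac{d}{dt}\varphi_v\hl(t)=X\hl(\varphi_v\hl(t))$. Here I would invoke the observation recorded in the text just before Lemma~\ref{lemparal}: if $\gamma$ is an integral curve of a vector field $Z\in\vm(M)$ and $Y:I\to\splt M$ is $\Eh$-parallel along $\gamma$, then $Y$ is an integral curve of $Z\hl$. I apply this with $Z:=X$, $\gamma:=\varphi_{\tau(v)}$ (which is indeed an integral curve of $X$ by definition of the flow), and $Y:=\varphi_v\hl$. By Lemma~\ref{lemparal}, the horizontal lift $\varphi_v\hl$ is parallel along $\varphi_{\tau(v)}$, so the cited observation immediately gives $\dot\varphi_v\hl(t)=X\hl(\varphi_v\hl(t))$ for all admissible $t$, which is exactly the integral-curve condition.

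Assembling these pieces, the uniqueness part of the standard flow theorem then identifies $\varphi\hl$ with \emph{the} (maximal local) flow of $X\hl$: any flow of $X\hl$ must, for each fixed $v$, be the unique integral curve of $X\hl$ through $v$, and we have just exhibited $\varphi_v\hl$ as such a curve with the correct maximal domain. Hence $\varphi\hl$ is the flow generated by $X\hl$.

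The main obstacle I anticipate is not the integral-curve identity — that is handed to us by Lemma~\ref{lemparal} and the preceding remark — but rather the careful verification that the domain $\hll W$ is precisely the maximal flow domain of $X\hl$ and that $\varphi\hl$ is jointly smooth there. One must argue that the horizontal-lift ODE (a linear system in the fibre coordinates, with coefficients the connection parameters $N^i_j$ pulled back along the base curve) has a solution on exactly the same $t$-interval as the base integral curve, using that the $N^i_j$ are smooth on $\mathring TM$ and positive-homogeneous of degree $1$ so that no solution escapes to the zero section or to infinity prematurely along a finite base interval; and joint smoothness in $(t,v)$ then follows from smooth dependence of the linear ODE on parameters together with smoothness of $\varphi$. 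This is routine but is the only place where genuine care is needed.
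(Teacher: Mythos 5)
Your proof is correct and follows essentially the same route as the paper: the paper simply writes out the composition of Lemma~1 (the horizontal lift is parallel along the base curve) with the fact that $\dot\varphi_{\tau(v)}(t)=X(\varphi_{\tau(v)}(t))$ to conclude $\dot{\varphi_v\hl}(t)=\Eh\big(\varphi_v\hl(t),X(\varphi_{\tau(v)}(t))\big)=X\hl\big(\varphi_v\hl(t)\big)$, which is exactly your appeal to the observation preceding Lemma~1. Your additional remarks on joint smoothness and maximality of the domain $\hll W$ are sound but go beyond what the paper verifies.
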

\begin{proof}
We have to check that for any fixed $v\in\splt M$,
\[\dot{\fentkapcs{\varphi_v\hl}}=X\hl\circ \varphi_v\hl.\]
If $t$ is in the domain of $\varphi_v\hl$, then
\begin{align*}
X\hl\big(\varphi_v\hl(t)\big):&=\Eh\big(\varphi_v\hl(t),X\circ\tau(\varphi_v\hl(t))\big)=\\
&=\Eh\big(\varphi_v\hl(t),X\circ\varphi_{\tau(v)}(t)\big)=\\
&=\Eh\big(\varphi_v\hl(t),\dot\varphi_{\tau(v)}(t)\big)\mathop{=}^{\textrm{Lemma \ref{lemparal}}}\\
&=\dot{\fentkapcs{\varphi_v\hl}}(t).
\end{align*}

\end{proof}

As in the classical theory of linear connections, a regular curve $\gamma:I\to M$ is said to be a \emph{geodesic} of an Ehresmann connection $\mathcal H$ if its velocity field $\dot\gamma$ is $\mathcal H$-parallel, i.e.,
\[
\ddot\gamma=\mathcal H(\dot\gamma,\dot\gamma).
\]

In local coordinates, the equations of parallel vector fields become
\[
\tag{P}X^i{}'+\left(N^i_j\circ X\right)\gamma^j{}'=0\quad(i\in\{1,\dots,n\}),
\]
if $X\upharpoonright\gamma^{-1}(\mathcal U)=X^i\left(\frac\partial{\partial u^i}\circ\gamma\right)$, $\gamma^i:=u^i\circ\gamma$, and, as above, the functions $N^i_j$ are the Christoffel symbols for $\mathcal H$. In particular, the \emph{geodesic equations} take the form
\[
\tag{G}\gamma^i{}''+\left(N^i_j\circ\dot\gamma\right)\gamma^j{}'=0,
\quad i\in\{1,\dots,n\}.
\]

We need the following simple observation.
\begin{lemma}
If $D$ is a torsion-free covariant derivative operator on $M$, and $\mathcal H_D$ is the Ehresmann connection induced by $D$, then a vector field $X:I\to TM$ along a curve $\gamma:I\to M$ is parallel with respect to $D$ if, and only if, it is $\mathcal H_D$-parallel.
\end{lemma}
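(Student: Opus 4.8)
The plan is to translate both notions of parallelism into the same coordinate ODE and observe that they coincide. Concretely, let $(\mathcal U,(u^i)_{i=1}^n)$ be a chart on $M$, let $\Gamma^i_{jk}\in C^\infty(\mathcal U)$ be the Christoffel symbols of $D$ on $\mathcal U$, and write $\gamma^i:=u^i\circ\gamma$ and $X\upharpoonright\gamma^{-1}(\mathcal U)=X^i\bigl(\tfrac{\partial}{\partial u^i}\circ\gamma\bigr)$. On the one hand, the classical formula for $D$-parallelism reads $X^i{}'+(\Gamma^i_{jk}\circ\gamma)\gamma^j{}'X^k=0$. On the other hand, by the coordinate description in 3.4 together with Crampin-type computation in 3.3(b), the connection parameters of $\mathcal H_D$ are $N^i_k=(\Gamma^i_{jk}\circ\tau)y^j$, so that $N^i_k\circ X=(\Gamma^i_{jk}\circ\gamma)X^j$; substituting into the parallel-transport equation (P) from the excerpt gives $X^i{}'+(\Gamma^i_{jk}\circ\gamma)X^j\gamma^k{}'=0$. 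These two equations are literally the same (up to relabelling the summation indices $j$ and $k$), which establishes the equivalence on $\gamma^{-1}(\mathcal U)$; since being parallel is a local condition and $M$ is covered by such charts, the equivalence holds globally.

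First I would recall from 3.3(b) that $\mathcal H_D$ is characterized by $X^{\mathrm h_D}=X^{\mathsf c}-\overline{DX}$ and that, in induced coordinates, $\bigl(\tfrac{\partial}{\partial u^k}\bigr)^{\mathrm h_D}=\tfrac{\partial}{\partial x^k}-(\Gamma^i_{jk}\circ\tau)y^j\tfrac{\partial}{\partial y^i}$; comparing with the defining identity for Christoffel symbols in 3.4 identifies $N^i_k=(\Gamma^i_{jk}\circ\tau)y^k$ --- wait, more precisely $N^i_k = (\Gamma^i_{jk}\circ\tau)y^j$. Then I would write out equation (P) for $\mathcal H_D$ explicitly. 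Second, I would write the standard coordinate expression for $X$ being $D$-parallel along $\gamma$, namely $\nabla_{\dot\gamma}X=0$ expanded via the Leibniz rule and $D_{\partial/\partial u^j}(\partial/\partial u^k)=\Gamma^i_{jk}\,\partial/\partial u^i$. Third, I would observe that the two resulting systems of first-order ODEs in $X^1,\dots,X^n$ are identical. Finally, I would invoke that parallelism is a pointwise/local property and that the charts cover $M$, so local agreement yields global agreement.

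Alternatively, one can give a slightly more conceptual argument avoiding coordinates: unwind the definition of $\mathcal H_D$-parallelism $\dot X(t)=\mathcal H_D(X(t),\dot\gamma(t))=X(t)^{\mathsf c}$-part minus $\overline{DX}$-part evaluated appropriately, and recognize the condition $\dot X=\mathcal H_D(X,\dot\gamma)$ as precisely the statement that the vertical component of $\dot X$ vanishes in the splitting determined by $\mathcal H_D$, which by construction of $\mathcal H_D$ from $D$ is exactly $D_{\dot\gamma}X=0$. Either route is routine; I would probably present the coordinate version since the excerpt has already set up equation (P) and the coordinate description of $\mathcal H_D$, making it the shortest path.

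I expect no real obstacle here --- the lemma is a bookkeeping identification of two descriptions of the same object. The only point requiring a little care is getting the index placement consistent between the formula $N^i_j=(\Gamma^i_{jk}\circ\tau)y^k$ in 3.4 (where the summed index sits in the last slot of $\Gamma$) and the formula from 3.3(b) (where it appears as $(\Gamma^i_{jk}\circ\tau)y^j$); these agree only because of the symmetry conventions, and since $D$ is assumed torsion-free, $\Gamma^i_{jk}=\Gamma^i_{kj}$, so there is genuinely nothing to worry about. Hence the hypothesis "torsion-free" is used exactly to make the two index conventions interchangeable, and the proof goes through cleanly.
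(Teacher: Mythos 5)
Your proof is correct and takes essentially the same route as the paper: substitute the Christoffel symbols $N^i_j=(\Gamma^i_{jk}\circ\tau)y^k$ of $\mathcal H_D$ into equation (P) and observe that the resulting ODE is the classical equation of $D$-parallelism. Your side remark --- that reconciling the index placement in 3.2(b), where $N^i_k=(\Gamma^i_{jk}\circ\tau)y^j$, with the convention $N^i_j=(\Gamma^i_{jk}\circ\tau)y^k$ of 3.4 is exactly where torsion-freeness is used --- is a genuine subtlety that the paper's one-line argument passes over in silence, and is worth keeping.
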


Indeed, if the Christoffel symbols for $D$ are the functions $\Gamma^i_{jk}\in C^\infty(\mathcal U)$, then the Christoffel symbols for $\mathcal H_D$ are $N^i_j=\left(\Gamma^i_{jk}\circ\tau\right)y^k$, hence
\[
N^i_j\circ X=\left(\left(\Gamma^i_{jk}\circ\tau\right)y^k\right)\circ X
=\left(\Gamma^i_{jk}\circ\tau\circ X\right)(y^k\circ X)
=\left(\Gamma^i_{jk}\circ\gamma\right)X^k,
\]
so equations (P) become
\[
X^i{}'+\left(\Gamma^i_{jk}\circ\gamma\right)\gamma^j{}'X^k=0,
\ i\in\{1,\dots,n\},
\]
which are the familiar equations of parallelism with respect to a covariant derivative operator.
\end{unit}

\begin{unit}
We say that an Ehresmann connection $\mathcal H$ in $TM$ is \emph{compatible} with a $C^1$-function $F:TM\to\mathbb R$ if $dF\circ\mathcal H=0$, or, equivalently, if
\[
X^{\mathrm h}F=0\mbox{ \emph{for all} }X\in\mathfrak X(M).
\]

\begin{lemma}
Assume that\/ $\mathcal H$ is a homogeneous Ehresmann connection in $TM$, and let $F:TM\to\mathbb R$ be a $C^1$-function. The following are equivalent:
\begin{enumerate}[(i)]
\item
$\mathcal H$ is compatible with $F$.
\item
For any $\mathcal H$-parallel vector field $X:I\to TM$ along a curve $\gamma:I\to M$, the function
\[
F\circ X:I\to\mathbb R
\]
is constant.
\end{enumerate}
\end{lemma}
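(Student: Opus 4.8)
The plan is to prove the two implications separately, exploiting the local description of parallel vector fields in equations (P) together with the defining characterization $X^{\mathrm h}F=0$ of compatibility.

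First I would establish (i)$\Rightarrow$(ii). Let $X:I\to TM$ be an $\mathcal H$-parallel vector field along $\gamma:I\to M$, so $\dot X(t)=\mathcal H(X(t),\dot\gamma(t))$ for all $t\in I$. Working in an induced chart, I compute
\[
(F\circ X)'(t)=dF_{X(t)}\big(\dot X(t)\big)=dF_{X(t)}\big(\mathcal H(X(t),\dot\gamma(t))\big)=(dF\circ\mathcal H)\big(X(t),\dot\gamma(t)\big)=0,
\]
using compatibility in the last step. Hence $F\circ X$ is constant on $I$. (Here one should note that $X$ takes values in $\mathring TM$ wherever $X(0)\neq0$, so $F$ is at least $C^1$ along $X$; if $X(0)=o(\gamma(0))$ the statement is trivial because parallel translation then keeps $X$ at the zero section by uniqueness and property (C4).)

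For (ii)$\Rightarrow$(i), I would argue pointwise: fix $v\in\mathring TM$, set $p:=\tau(v)$, and let $w\in T_pM$ be arbitrary. Choose a vector field $X\in\mathfrak X(M)$ with $X(p)=w$ and a curve $\gamma:I\to M$ with $\gamma(0)=p$, $\dot\gamma(0)=w$ (e.g.\ an integral curve of $X$). Let $Y:I\to TM$ be the $\mathcal H$-parallel vector field along $\gamma$ with $Y(0)=v$; this exists and takes values in $\mathring TM$ near $0$ by the existence/uniqueness theorem cited before Lemma~\ref{lemparal}. By (ii), $F\circ Y$ is constant, so differentiating at $t=0$ gives $0=(F\circ Y)'(0)=dF_v(\dot Y(0))=dF_v\big(\mathcal H(v,w)\big)$. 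Since $w\in T_pM$ was arbitrary and $v$ ranges over all of $\mathring TM$, and since $\mathcal H$ agrees with $(o_*)_p$ on the zero section (property (C4), on which $dF$ composed with $\mathcal H$ is automatically controlled by the $C^1$-character of $F$ at $0$), we conclude $dF\circ\mathcal H=0$ on all of $TM\times_M TM$, i.e.\ $\mathcal H$ is compatible with $F$. Equivalently, for each $X\in\mathfrak X(M)$ and each $v$, $(X^{\mathrm h}F)(v)=dF_v\big(\mathcal H(v,X(\tau(v)))\big)=0$.

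The only genuinely delicate point is the behaviour on the zero section: $F$ is merely $C^1$ on $TM$ (smooth only on $\mathring TM$), and $\mathcal H$ is likewise only continuous there, so one must check that the identity $dF\circ\mathcal H=0$, established on the open dense set $\mathring TM\times_M TM$ by the curve argument, extends by continuity to the zero section. This follows since both $dF$ and $\mathcal H$ are continuous on all of $TM$ (resp.\ $TM\times_M TM$) by the $C^1$-hypothesis and (C1)--(C4); no homogeneity of $\mathcal H$ is actually needed for this particular extension, though homogeneity is what guarantees the parallel translations used in (ii)$\Rightarrow$(i) are globally defined diffeomorphisms. I do not expect any other obstacle: the heart of the proof is just the chain rule applied to $F\circ X$ along a parallel curve, in the two directions.
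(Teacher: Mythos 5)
Your proof is correct and follows essentially the same route as the paper: the chain-rule computation $(F\circ X)'(t)=dF\circ\mathcal H\big(X(t),\dot\gamma(t)\big)$ is exactly the paper's argument, from which both implications follow. You are merely more explicit than the paper about the converse direction (realizing an arbitrary pair $(v,w)$ by a parallel field along a suitable curve) and about the zero section, where in fact $dF\circ\mathcal H$ vanishes automatically because $F\circ o=0$ and $\mathcal H(o(p),\cdot)=(o_*)_p$.
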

\begin{proof}
Let $\frac d{dr}$ be the canonical vector field on $I$ ($r:=1_{\mathbb R}$). $F\circ X$ can be considered as a curve in $\mathbb R$; then for each $t\in\mathbb R$ we have
\begin{align*}
(F\circ X)'(t)&=\dot{\fentkapcs{F\circ X}}(t)
=((F\circ X)_*)_t\left(\frac d{dr}\right)_t\\
&=(F_*)_{X(t)}\circ(X_*)_t\left(\frac d{dr}\right)_t
=(F_*)_{X(t)}\left(\dot X(t)\right)\\
&=(dF)_{X(t)}(\mathcal H(X(t)),\dot\gamma(t)))
=dF\circ\mathcal H(X(t),\dot\gamma(t))
\end{align*}
identifying in our calculation the derivative $(F_*)_{X(t)}$ with the differential $(dF)_{X(t)}$, and taking into account the condition that $X$ is parallel along $\gamma$. The relation so obtained implies immediately that \emph{$F\circ X$ is constant if, and only if, $dF\circ\mathcal H=0$.}
\end{proof}
\end{unit}

\section{Basic facts on Finsler functions}
\setcounter{unit}0

\begin{unit}
A function $F:TM\to\mathbb R$ is said to be a \emph{Finsler function} if
\begin{enumerate}[(F1)]
\item
$F$ is smooth on $\mathring TM$;
\item
$F(\lambda v)=\lambda F(v)$ for all $v\in TM$ and positive real number $\lambda$;
\item
$F(v)>0$ if $v\in\mathring TM$;
\item
the \emph{metric tensor} $g$ defined on basic sections by
\[
g\big(\widehat X,\widehat Y\big):=\frac 12X^{\mathsf v}\big(Y^{\mathsf v}F^2\big)
\quad(X,Y\in\mathfrak X(M))
\]
is fibrewise nondegenerate.
\end{enumerate}

A \emph{Finsler manifold} is a manifold endowed with a Finsler function on its tangent manifold. More formally, a Finsler manifold is a pair $(M,F)$ consisting of a manifold $M$ and a Finsler function $F$ on $TM$. For each $v\in TM$, $F(v)$ is called the \emph{Finsler norm} of $v$.

By conditions (F1) and (F2), $F$ is continuous on $TM$ and vanishes on $o(M)$. It may be shown that our requirements on a Finsler function also imply that the metric tensor is (fibrewise) positive definite \cite{Ker,L}.

\begin{fact}
Let $(M,F)$ be a Finsler manifold. There exists a unique spray $S$ for $M$, called the \emph{canonical spray} of $(M,F)$, defined to be zero on $o(M)$ and satisfying
\[
i_Sd(dF^2\circ \mathbf{J})=-dF^2
\]
on $\mathring TM$.
\end{fact}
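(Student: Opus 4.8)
The plan is to reduce the existence and uniqueness of the canonical spray to a linear-algebra fact about the vertical endomorphism $\mathbf J$ and the metric tensor $g$, then feed in the homogeneity coming from (F2). First I would unpack the defining equation on $\mathring TM$ in the language of the structures already set up. Writing $\omega:=d(dF^2\circ\mathbf J)$, this is a $2$-form on $\mathring TM$, and the equation $i_S\omega=-dF^2$ prescribes $S$ as a solution of a linear equation at each point. The key observation is that, by (F4) and the coordinate formula for $\mathbf J$, the restriction of $\omega$ to the product of the vertical and the "complete-lift" directions is exactly (twice) the metric tensor $g$, which is nondegenerate (indeed positive definite by the remark preceding the FACT); meanwhile $\omega$ vanishes on pairs of vertical vectors because $dF^2\circ\mathbf J$ is a semibasic $1$-form and $\mathbf J^2=0$. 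Hence $\omega$ has maximal rank transverse to the vertical distribution, so $i_\bullet\omega$, restricted appropriately, is invertible, and $i_S\omega=-dF^2$ determines $S$ uniquely up to a vertical vector field. The condition ($S_3$), $\mathbf J S=C$, then pins down that vertical ambiguity, since $i_{X^{\mathsf v}}\omega=2\,i_{\widehat X}g$ "sees" exactly the $\mathbf J$-image of $S$; I would check that $i_Sd(dF^2\circ\mathbf J)=-dF^2$ forces $\mathbf JS=C$ automatically, using $i_C(dF^2\circ\mathbf J)=0$ and the homogeneity identity $\mathcal L_C(dF^2)=2\,dF^2$ (from (F2) and $Cf^{\mathsf c}=f^{\mathsf c}$-type reasoning, i.e.\ $CF^2=2F^2$). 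This is the cleanest route: the equation builds in ($S_1$) and ($S_3$) for free.

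Next I would verify the spray conditions. Condition ($S_1$), $\tau_{TM}\circ S=1_{TM}$, is automatic for any vector field on $TM$. Condition ($S_2$): $S$ is smooth on $\mathring TM$ because it is obtained by inverting a smooth nondegenerate bundle map there; extending by zero over $o(M)$, one gets at least $C^1$ regularity on all of $TM$ — this is where I expect to lean on a homogeneity estimate (the local functions $G^i$ turn out to be positive-homogeneous of degree $2$, hence $C^1$ at the origin), so I would compute the coordinate expression and read off that the $G^i$ are quadratic in the $y$'s to leading order. Condition ($S_3$) comes from the paragraph above. Condition ($S_4$), $[C,S]=S$: this follows from the degree-$1$ homogeneity of $F$, which makes $dF^2\circ\mathbf J$ homogeneous of degree $1$ under the flow of $C$ and hence $\omega$ homogeneous of degree $1$, while $dF^2$ is homogeneous of degree $2$; plugging $\mathcal L_C$ through the relation $i_S\omega=-dF^2$ and using $[C,C]=0$, $\mathcal L_C\omega=\omega$, $\mathcal L_C dF^2=2dF^2$ yields $i_{[C,S]-S}\omega=0$, and nondegeneracy transverse to the vertical plus $\mathbf J(\,[C,S]-S\,)=0$ (both $[C,S]$ and $S$ have $\mathbf J$-image $C$, since $[C,\mathbf J]$ acts predictably and $\mathbf JS=C$) gives $[C,S]=S$.

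The main obstacle is the regularity claim in ($S_2$): showing the vector field defined by inverting $\omega$ on $\mathring TM$ and set to zero on $o(M)$ is genuinely $C^1$ across the zero section. Everything else is formal manipulation with $\mathbf J$, $C$, and the Lie derivative; the $C^1$-extension requires an honest local computation exhibiting the connection coefficients $G^i$ as smooth functions of $(x,y)$ on $\mathring TM$ that are positive-homogeneous of degree $2$ in $y$, whence continuity of $G^i$ and their first derivatives at $y=0$. I would carry this out in an induced chart, write $F^2$ in coordinates, compute $d(dF^2\circ\mathbf J)$ and solve the resulting linear system using the inverse of the matrix $\big(g_{ij}\big)$ of the metric tensor, and then invoke Euler's relation on the homogeneous functions $\partial F^2/\partial y^i$ to get the homogeneity degree of $G^i$. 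Uniqueness, finally, is immediate: two solutions differ by a vertical field $\xi$ with $i_\xi\omega=0$, and since $\omega$ is nondegenerate on the vertical-versus-complement pairing, $\xi=0$.
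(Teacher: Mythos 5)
Your plan is correct, and it is the standard (symplectic) argument; note that the paper itself states this as a \textsc{Fact} without proof, the literature references in that subsection being attached to the two subsequent facts, so there is no in-text proof to compare against. The checks you defer all go through: in an induced chart $\omega:=d(dF^2\circ\mathbf J)$ has $\omega\big(\partial/\partial y^j,\partial/\partial x^i\big)=\partial^2F^2/\partial y^j\partial y^i=2g_{ji}$ and vanishes on pairs of vertical vectors, so (F4) makes $\omega$ nondegenerate on $\mathring TM$ and $i_S\omega=-dF^2$ has a unique solution there; contracting the equation with $\partial/\partial y^k$ and applying Euler's relation to the $1^+$-homogeneous functions $\partial F^2/\partial y^k$ gives $2g_{ki}S^i=\partial F^2/\partial y^k=2g_{ki}y^i$, hence $S^i=y^i$ and $\mathbf JS=C$ is indeed forced; and $\mathcal L_C\omega=\omega$ together with $\mathcal L_CdF^2=2dF^2$ yields $i_{[C,S]-S}\omega=0$, whence $(S_4)$ by nondegeneracy alone (your extra appeal to $\mathbf J([C,S]-S)=0$ is harmless but not needed). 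You also correctly isolate the only step requiring genuine care, the $C^1$ extension across $o(M)$: the $G^i$ are $2^+$-homogeneous and smooth off the zero section, so they and their first partials (which are $1^+$-homogeneous) extend continuously by $0$, which is exactly the regularity demanded of a semispray in $(S_2)$.
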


\begin{fact}
The torsion-free Ehresmann connection associated to the canonical spray of a Finsler manifold by Crampin's construction \textup{(3.2(a))} is \emph{homogeneous} and \emph{compatible with the Finsler function}.
\end{fact}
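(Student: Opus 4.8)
The plan is to derive both properties from the single identity $i_S\,d(dF^2\circ\mathbf J)=-dF^2$ characterizing the canonical spray $S$ (FACT~1), together with the bracket rule $d\omega(\xi,\eta)=\xi\omega(\eta)-\eta\omega(\xi)-\omega([\xi,\eta])$ and the elementary algebra of lifts from~\S2. Write $E:=F^2$; since $E$ is smooth and $F>0$ on $\mathring{TM}$ and $X\hl(F^2)=2F\,(X\hl F)$ there, proving compatibility amounts to proving $X\hl E=0$ for all $X\in\mathfrak X(M)$. By Crampin's construction \textup{(3.2(a))}, $X\hl=\tfrac12\big(X\tl+[X\vl,S]\big)$.

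\emph{Homogeneity.} Here I would use only the identities $[C,X\tl]=0$ (invoked already in \textup{(3.2(b))}; a one-line check on $f\tl$ and $f\vl$), $[C,X\vl]=-X\vl$ (evaluate both sides on a complete lift $f\tl$, using $Cf\tl=f\tl$, $C(f\vl)=0$, $X\vl f\tl=(Xf)\vl$, and the uniqueness of the vertical lift), and $[C,S]=S$ (axiom $(S_4)$). The Jacobi identity gives
\[ [C,[X\vl,S]]=[[C,X\vl],S]+[X\vl,[C,S]]=[-X\vl,S]+[X\vl,S]=0, \]
whence $[C,X\hl]=\tfrac12\big([C,X\tl]+[C,[X\vl,S]]\big)=0$; that is, $[X\hl,C]=0$ for every $X\in\mathfrak X(M)$, so the connection is homogeneous.

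\emph{Compatibility.} Write $\theta:=dF^2\circ\mathbf J=dE\circ\mathbf J$, so FACT~1 reads $i_S\,d\theta=-dE$ on $\mathring{TM}$. From the definition of $\mathbf J$ one gets $\theta(X\vl)=0$ and $\theta(X\tl)=dE(X\vl)=X\vl E$, while $\mathbf JS=C$ and Euler's identity (for the positively $2$-homogeneous $E$) give $\theta(S)=dE(C)=CE=2E$. Feeding $S$ itself into the spray equation, $0=i_S\,i_S\,d\theta=-i_S\,dE=-SE$, so $SE=0$. Next one checks that $[X\tl,S]$ is vertical: in an induced chart its $\partial/\partial x^k$-component equals $X\tl(y^k)-S(X^k\circ\tau)=y^j\big(\tfrac{\partial X^k}{\partial u^j}\circ\tau\big)-y^j\big(\tfrac{\partial X^k}{\partial u^j}\circ\tau\big)=0$; hence $\theta([X\tl,S])=dE(\mathbf J[X\tl,S])=0$. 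Evaluating the spray equation on $X\tl$ and expanding,
\begin{align*}
-X\tl E&=d\theta(S,X\tl)=S\theta(X\tl)-X\tl\theta(S)-\theta([S,X\tl])\\
&=S(X\vl E)-2X\tl E,
\end{align*}
so $S(X\vl E)=X\tl E$. Therefore
\begin{align*}
2\,X\hl E&=X\tl E+[X\vl,S]E=X\tl E+X\vl(SE)-S(X\vl E)\\
&=X\tl E+0-X\tl E=0,
\end{align*}
i.e.\ $X\hl F=0$ on $\mathring{TM}$, which is compatibility with $F$.

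I expect no serious obstacle; the argument is a disciplined unwinding of definitions. The one step that is not pure formalism is the verticality of $[X\tl,S]$ — a property of every semispray, immediate from the coordinate shape of $S$ — while the full strength of $i_S\,d(dF^2\circ\mathbf J)=-dF^2$ enters only through its two evaluations, on $S$ (giving $SE=0$) and on the complete lifts $X\tl$ (giving $S(X\vl E)=X\tl E$); the spray axiom $(S_4)$ is used only for homogeneity.
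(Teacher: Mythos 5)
Your proof is correct, and it is worth noting that the paper itself offers no proof of this Fact at all --- it merely points the reader to the reference \cite{LSz-Glob} --- so your argument is a genuinely self-contained substitute rather than a variant of something in the text. Each step checks out: the bracket identities $[C,X\tl]=0$, $[C,X\vl]=-X\vl$ together with $(S_4)$ and Jacobi do give $[C,X\hl]=0$; and on the compatibility side, the three evaluations $\theta(X\vl)=0$, $\theta(X\tl)=X\vl E$, $\theta(S)=CE=2E$ (the last via $\mathbf JS=C$ and Euler for the $2^+$-homogeneous $E=F^2$), the antisymmetry argument $SE=0$, the coordinate verification that $[X\tl,S]$ is vertical, and the resulting identity $S(X\vl E)=X\tl E$ combine exactly as you say to give $2X\hl E=X\tl E+X\vl(SE)-S(X\vl E)=0$, hence $X\hl F=0$ on $\mathring TM$ where $F>0$. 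The argument uses only FACT~1, the lift calculus of section~2, and the paper's convention $d\omega(\xi,\eta)=\xi\omega(\eta)-\eta\omega(\xi)-\omega([\xi,\eta])$, so it is very much in the spirit of the ``simplest tools'' programme the authors announce in the introduction; the only non-formal ingredient, as you observe, is the verticality of $[X\tl,S]$, which holds for every semispray. Two cosmetic remarks: your $E$ is $F^2$ rather than the $\frac12F^2$ used elsewhere in the paper (harmless, but worth flagging to avoid confusion with section~7), and the compatibility condition should be understood on $\mathring TM$, which is where $X\hl$ is defined and where your computation lives.
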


For a quite recent index-free proof of this fact we refer to \cite{LSz-Glob}.

\begin{fact}[the uniqueness of the canonical connection]
If a torsion-free, homogeneous Ehresmann connection is compatible with a Finsler function, then it is the canonical connection of the Finsler manifold.
\end{fact}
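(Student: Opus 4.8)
The statement asserts: if $\mathcal{H}$ is a torsion-free, homogeneous Ehresmann connection compatible with the Finsler function $F$, then $\mathcal{H}$ coincides with the canonical connection, i.e., the connection produced by Crampin's construction from the canonical spray. Let me think about how the pieces available so far fit together.

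The plan is to show that any such $\mathcal{H}$ has, as its geodesic spray, precisely the canonical spray $S$ of $(M,F)$; once we know the spray is forced, torsion-freeness pins down $\mathcal{H}$ itself. For the first part, recall that a homogeneous Ehresmann connection $\mathcal{H}$ determines a semispray $S_\mathcal{H}$ via $S_\mathcal{H}(v) := \mathcal{H}(v,v)$ (equivalently $S_\mathcal{H} = \hh \circ S_0$ for any semispray $S_0$), and homogeneity of $\mathcal{H}$ forces $S_\mathcal{H}$ to be a spray (it satisfies $[C,S_\mathcal{H}] = S_\mathcal{H}$). So it suffices to prove $S_\mathcal{H}$ satisfies the defining equation $i_{S_\mathcal{H}} d(dF^2 \circ \mathbf{J}) = -dF^2$ on $\mathring{T}M$; uniqueness in FACT~3 (in the section on the canonical spray) then gives $S_\mathcal{H} = S$. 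To establish that equation I would work with the $1$-form $\theta := dF^2 \circ \mathbf{J}$ (the Finsler form) and use the compatibility hypothesis $X^{\mathrm h}F = 0$, hence $X^{\mathrm h}F^2 = 0$, together with torsion-freeness in the bracket form $[X^{\mathrm h},Y^{\mathrm v}] - [Y^{\mathrm h},X^{\mathrm v}] - [X,Y]^{\mathrm v} = 0$. Evaluating $i_{S_\mathcal{H}} d\theta$ on a vertical lift $X^{\mathrm v}$ and expanding $d\theta(S_\mathcal{H},X^{\mathrm v})$ via the Cartan formula $d\theta(U,V) = U\theta(V) - V\theta(U) - \theta([U,V])$, one writes $S_\mathcal{H}$ locally as $X_i^{\mathrm h}$ contracted with the Liouville data and reduces everything to derivatives of $F^2$ along horizontal and vertical directions; the compatibility kills the horizontal derivatives and what remains is exactly $-dF^2(X^{\mathrm v}) = -X^{\mathrm v}F^2$, using the identity $\theta(X^{\mathrm c}) = \mathbf{J}X^{\mathrm c} (F^2) = X^{\mathrm v}F^2$ and homogeneity (Euler's relation $C F^2 = 2F^2$).

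Once $S_\mathcal{H} = S$ is known, the second step is that $\mathcal{H}$ is determined by its associated spray because it is torsion-free. Indeed, Crampin's construction (3.2(a)) produces from $S$ a torsion-free Ehresmann connection $\mathcal{H}^{\mathrm{Cr}}$ with $X^{\mathrm h_{\mathrm{Cr}}} = \tfrac12(X^{\mathrm c} + [X^{\mathrm v},S])$, and it is a standard fact (which I would either cite or prove in a line) that the horizontal projection $\hh$ of \emph{any} Ehresmann connection satisfies $\hh S_0 = S_\mathcal{H}$ for $S_0$ a semispray, while \emph{torsion-free} connections with the same spray have the same horizontal distribution: the difference of two torsion-free connections with identical geodesics is a $\mathbf{J}$-related tensorial object (the difference tensor is vertical-valued, symmetric, and traceless against $C$), and homogeneity plus torsion-freeness force it to vanish. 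Concretely, in coordinates the connection parameters of a torsion-free homogeneous connection are $N^i_j = \partial G^i/\partial y^j$ where $S = y^i\partial_{x^i} - 2G^i\partial_{y^i}$; since both $\mathcal{H}$ and $\mathcal{H}^{\mathrm{Cr}}$ are torsion-free and homogeneous with the same $G^i$ (same spray), their $N^i_j$ agree, so $\mathcal{H} = \mathcal{H}^{\mathrm{Cr}}$, which by FACT~2 is the canonical connection.

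The main obstacle I anticipate is the first step — showing the compatibility equation $X^{\mathrm h}F = 0$ together with torsion-freeness forces $i_{S_\mathcal{H}} d\theta = -dF^2$ — because it requires carefully disentangling the Cartan-formula expansion of $d\theta(S_\mathcal{H}, X^{\mathrm v})$ into horizontal and vertical contributions and recognizing that the bracket $[S_\mathcal{H}, X^{\mathrm v}]$ decomposes, via torsion-freeness and the formula $S_\mathcal{H} = \hh S_0$, into a horizontal part (annihilated after applying $\theta$ and using $\mathbf{J}\hh = \mathbf{J}$, $\theta = dF^2\circ\mathbf{J}$, and compatibility) plus a vertical correction that reconstitutes $-X^{\mathrm v}F^2$. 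The homogeneity hypothesis enters exactly to control the radial ($C$-)direction. Everything else — the uniqueness of the spray (FACT~3), that Crampin's connection is the canonical one (FACT~2), and the coordinate rigidity $N^i_j = \partial G^i/\partial y^j$ for torsion-free homogeneous connections — is either already in the excerpt or a short coordinate computation.
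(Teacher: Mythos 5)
Your proposal is correct, but note that the paper itself offers no proof of this Fact: it is stated as a known result and the reader is referred to \cite{SzZ-ArXiV} for a proof based on an idea of Z.~I.~Szab\'o. So there is nothing in the text to compare against line by line; what you have done is supply the missing argument, and it checks out. Your two-step strategy is sound. For the second step, the coordinate identity is the decisive point and it does hold: if $N^i_j$ are the Christoffel symbols of a torsion-free homogeneous Ehresmann connection and $2G^i:=N^i_jy^j$ are the coefficients of its associated semispray $S_{\mathcal H}(v):=\mathcal H(v,v)$, then torsion-freeness gives $\partial N^i_j/\partial y^k=\partial N^i_k/\partial y^j$ and homogeneity gives $y^k\,\partial N^i_j/\partial y^k=N^i_j$ (Euler), whence $\partial G^i/\partial y^j=N^i_j$; so a torsion-free homogeneous connection is determined by its spray, and Crampin's construction applied to a spray returns that same spray. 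For the first step, the computation you flag as the main obstacle does close: with $\theta:=dF^2\circ\mathbf J$ one finds $d\theta(S_{\mathcal H},X^{\mathsf v})=-X^{\mathsf v}F^2$ for \emph{any} semispray (using $\mathbf J[S_{\mathcal H},X^{\mathsf v}]=-X^{\mathsf v}$ and $CF^2=2F^2$), while $d\theta(S_{\mathcal H},X^{\mathrm h})=0=-X^{\mathrm h}F^2$ follows by differentiating the compatibility relation $\partial F^2/\partial x^j=N^i_j\,\partial F^2/\partial y^i$ with respect to $y^k$ and again invoking torsion-freeness and Euler's relation; FACT~1 then forces $S_{\mathcal H}=S$. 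One small caveat: your parenthetical description of the difference of two such connections as a ``vertical-valued, symmetric, traceless against $C$'' tensor is vague and unnecessary --- the coordinate rigidity $N^i_j=\partial G^i/\partial y^j$ already finishes the job.
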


For a simple recent proof, based on an idea of Z.~I.~Szab\'o, we refer to \cite{SzZ-ArXiV}.

\begin{note}
If $\mathcal H$ is the canonical connection of a Finsler manifold, then, by Lemma~2, \emph{the Finsler norm of a vector remains invariant under $\mathcal H$-parallel translations}.
\end{note}
\end{unit}

\begin{unit}
Let $(M,F)$ be a Finsler manifold with canonical spray $S$ and canonical connection $\mathcal H$.
\begin{enumerate}[(a)]
\item
By a \emph{geodesic} of $(M,F)$ we mean a geodesic of its canonical connection, or, equivalently, a regular curve $\gamma:I\to M$ whose velocity field is an integral curve of the canonical spray:
\[
\ddot\gamma=S\circ\dot\gamma.
\]
\item
$\mathcal H$ induces a covariant derivative operator
\[
\nabla:\mathfrak X\big(\mathring TM\big)
\times\Sec\big(\mathring\pi\big)\to\Sec\big(\mathring\pi\big),
\]
called the \emph{Berwald derivative} of $(M,F)$, such that for all $X,Y\in\mathfrak X(M)$,
\[
\vlift\nabla_{X^{\mathrm h}}\widehat Y=\left[X^{\mathrm h},Y^{\mathsf v}\right],\ \nabla_{X^{\mathsf v}}\widehat Y=0.
\]
Here, the first relation can be written in the equivalent form
\[\nabla_{X\hl}\kal Y=\Ve\big[X\hl,Y\vl\big].\]
\item
The \emph{Berwald curvature} of $(M,F)$ is the type (1,3) tensor $\mathbf B$ on the $C^\infty\big(\mathring TM\big)$-module $\Sec(\mathring\pi)$ such that
\[
\vlift\mathbf B\big(\widehat X,\widehat Y,\widehat Z\big)
=\left[X^{\mathsf v},\left[Y^{\mathrm h},Z^{\mathsf v}\right]\right]
\]
for all $X,Y,Z\in\mathfrak X(M)$.
\end{enumerate}
\end{unit}

\begin{lemma}\label{lemBnab}
If\/ $\nabla$ is the Berwald derivative induced by a torsion-free Ehresmann connection $\Eh$, then for any vector fields $X,Y$ on $M$,
\[\nabla_{X\hl}\kal Y-\nabla_{Y\hl}\kal X=\kal{[X,Y]}.\]
\end{lemma}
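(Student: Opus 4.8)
The plan is to push the claimed identity through the vertical lift isomorphism $\vlift$ and reduce it to the torsion-freeness hypothesis on $\Eh$. Since $\vlift\colon\Sec(\mathring\pi)\to\mathfrak X^{\mathsf v}(\mathring TM)$ is a $C^\infty(\mathring TM)$-linear isomorphism (in particular injective), it suffices to verify the equality after applying $\vlift$ to both sides.

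First I would compute the left-hand side: by $C^\infty(\mathring TM)$-linearity of $\vlift$ and the defining relation of the Berwald derivative, $\vlift\nabla_{X\hl}\kal Y=[X\hl,Y\vl]$ and $\vlift\nabla_{Y\hl}\kal X=[Y\hl,X\vl]$, so
\[
\vlift\bigl(\nabla_{X\hl}\kal Y-\nabla_{Y\hl}\kal X\bigr)=\bigl[X\hl,Y\vl\bigr]-\bigl[Y\hl,X\vl\bigr].
\]
Next I would invoke the definition of torsion-freeness of $\Eh$, namely $[X\hl,Y\vl]-[Y\hl,X\vl]-[X,Y]\vl=0$, to rewrite the right side of the last display as $[X,Y]\vl$. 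Finally I would observe that $[X,Y]\vl=\vlift\bigl(\kal{[X,Y]}\bigr)$ by the very definition of $\vlift$ on basic sections, and conclude by injectivity of $\vlift$ that $\nabla_{X\hl}\kal Y-\nabla_{Y\hl}\kal X=\kal{[X,Y]}$.

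There is essentially no serious obstacle here; the only points requiring a moment's care are that everything is taking place over $\mathring TM$ (so that the Berwald derivative and the restricted vertical lift are defined), and that one is entitled to replace $\vlift\nabla_{X\hl}\kal Y$ by $[X\hl,Y\vl]$ even though the defining relation in 5.2(b) is stated for basic sections $\kal Y$ with $Y\in\mathfrak X(M)$ — which is exactly the case we need. So the whole argument is a two-line chase through $\vlift$, with the torsion-free identity doing all the work in the middle.
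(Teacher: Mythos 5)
Your proposal is correct and follows exactly the same route as the paper's proof: apply the injective vertical lift to both sides, use the defining relation $\vlift\nabla_{X\hl}\kal Y=[X\hl,Y\vl]$, and let the torsion-freeness identity convert $[X\hl,Y\vl]-[Y\hl,X\vl]$ into $[X,Y]\vl=\vlift\kal{[X,Y]}$. Nothing is missing.
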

\begin{proof}
Applying the definition of $\nabla$ and the torsion-freeness of $\Eh$,
\[\vlift\big(\nabla_{X\hl}\kal Y-\nabla_{Y\hl}\kal X\big)=[X\hl,Y\vl]-[Y\hl,X\vl]=[X,Y]\vl=\vlift\kal{[X,Y]},\]
whence our claim.
\end{proof}
\begin{unit}
For the sake of readers who prefer the language of classical tensor calculus, we present here the coordinate expressions of some important objects introduced above.

Let $(M,F)$ be a Finsler manifold. Choose a chart $\left(\mathcal U,\left(u^i\right)_{i=1}^n\right)$ on $M$, and consider the induced chart $\left(\tau^{-1}(\mathcal U),\left(x^i,y^i\right)_{i=1}^n\right)$ on $TM$.

\begin{enumerate}[(i)]
\item
The components of the metric tensor of $(M,F)$ are the functions
\[
g_{ij}:=g\bigg(\widehat{\frac\partial{\partial u^i}},
\widehat{\frac\partial{\partial u^j}}\bigg)
=\frac 12\frac{\partial^2F^2}{\partial y^i\partial y^j},
\ i,j\in\{1,\dots,n\}.
\]
\item
Over $\tau^{-1}(\mathcal U)$, the canonical spray of $(M,F)$ can be represented in the form
\[
S=y^i\frac\partial{\partial x^i}-2G^i\frac\partial{\partial y^i},
\]
where the \emph{spray coefficients} are
\[
G^i=\frac 14g^{ij}\left(\frac{\partial^2F^2}{\partial x^r\partial y^j}y^r
-\frac{\partial F^2}{\partial x^j}\right);
\ \left(g^{ij}\right):=(g_{ij})^{-1}.
\]
\item
The Christoffel symbols of the canonical connection and the Berwald derivative are
\[
G^i_j:=\frac{\partial G^i}{\partial y^j}\quad
\mbox{and}\quad G^i_{jk}:=\frac{\partial G^i}{\partial y^j\partial y^k},
\]
respectively. Then
\[
\left(\frac\partial{\partial u^j}\right)^{\mathrm h}
=\frac\partial{\partial x^j}-G^i_j\frac\partial{\partial y^i},\quad
\nabla_{\left(\frac\partial{\partial u^j}\right)^{\mathrm h}}\
\widehat{\frac\partial{\partial u^k}}
=G^i_{jk}\widehat{\frac\partial{\partial u^i}}.
\]
The components of the Berwald curvature are given by
\[
\mathbf B\bigg(\widehat{\frac\partial{\partial u^j}},
\widehat{\frac\partial{\partial u^k}},
\widehat{\frac\partial{\partial u^l}}\bigg)
=G^i_{jkl}\widehat{\frac\partial{\partial u^i}},
\quad G^i_{jkl}:=\frac{\partial G^i_{jk}}{\partial y^l},
\]
from which it is clear that $\mathbf B$ is totally symmetric.
\end{enumerate}
\end{unit}

\section{Berwald manifolds}

\renewcommand\labelenumi{\upshape{(B\theenumi)}}
\begin{prop}
\emph{Let $(M,F)$ be a Finsler manifold. The following conditions are equivalent:
\begin{enumerate}
\item
$(M,F)$ is an `affinely connected space' in Berwald's sense \cite{Ber}, that is, the Christoffel symbols $G^i_{jk}$ of the Berwald derivative `depend only on the position'.
\item
The Berwald curvature of $(M,F)$ vanishes.
\item
There exists a covariant derivative operator $D$ on $M$ such that for all $X,Y\in\mathfrak X(M)$,
\[
(D_XY)^{\mathsf v}=\left[X^{\mathrm h},Y^{\mathsf v}\right]
\]
\textup(the horizontal lift is taken with respect to the canonical connection of $(M,F)$\textup). This covariant derivative is torsion-free.
\item
The Berwald derivative $\nabla$ of $(M,F)$ is \emph{$h$-basic} in the sense that there exists a covariant derivative operator $D$ on $M$ such that
\[\vlift \nabla_{X\hl}\kal Y=(D_XY)\vl \quad\textrm{\emph{for all}}\quad X,Y\in\vm(M).\]
\item
The Lie bracket $\left[X^{\mathrm h},Y^{\mathsf v}\right]$ is a vertical lift for any vector fields $X,Y$ on~$M$.
\item
The canonical spray of $(M,F)$ is an affine spray.
\item
The canonical connection of $(M,F)$ is a linear connection.
\item
There exists a torsion-free covariant derivative operator $D$ on $M$ such that the parallel translations with respect to $D$ preserve the Finsler norms of tangent vectors to $M$.
\item
There exists a torsion-free covariant derivative operator $D$ on $M$ such that the geodesics of $D$ coincide with the geodesics of $(M,F)$ as parametrized curves.
\end{enumerate}}
\end{prop}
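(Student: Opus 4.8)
The plan is to let the five ``coordinate'' conditions $(B1)$, $(B2)$, $(B5)$, $(B6)$, $(B7)$ act as a hub and attach the remaining conditions to it. I would first dispose of the equivalences that are essentially bookkeeping with the formulae of 4.4(iii). Since the components of the Berwald curvature are $G^i_{jkl}=\partial G^i_{jk}/\partial y^l$, vanishing of $\mathbf B$ means exactly that the $G^i_{jk}$ do not depend on the $y$'s, so $(B1)\Leftrightarrow(B2)$. Because $\vlift\nabla_{X^{\mathrm h}}\widehat Y=[X^{\mathrm h},Y^{\mathsf v}]$ by the definition of the Berwald derivative, the bracket $[X^{\mathrm h},Y^{\mathsf v}]$ is automatically vertical, and for coordinate vector fields it equals $G^i_{jk}\,\partial/\partial y^i$; a vertical vector field of this form is the vertical lift of a vector field on $M$ precisely when its components are constant along the fibres, which yields $(B5)\Leftrightarrow(B1)$. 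Finally, the canonical spray and connection are homogeneous, so Euler's theorem makes $G^i$, $G^i_j=\partial G^i/\partial y^j$ and $G^i_{jk}=\partial^2 G^i/\partial y^j\partial y^k$ positive-homogeneous of degrees $2$, $1$, $0$ in $y$; hence $G^i_{jk}$ depends on the position only $\iff$ $G^i_j=G^i_{jk}y^k$ is linear in $y$ (so that $\mathcal H$ is a linear connection) $\iff$ $G^i=\tfrac12 G^i_{jk}y^jy^k$ is fibrewise quadratic with smooth coefficients (so that $S$ extends smoothly across the zero section, i.e.\ is an affine spray). This settles $(B1)\Leftrightarrow(B6)\Leftrightarrow(B7)$.

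Next I would treat the conditions phrased through a covariant derivative on $M$. Since $(D_XY)^{\mathsf v}=\vlift(\widehat{D_XY})$, the equation of $(B3)$ reads $\nabla_{X^{\mathrm h}}\widehat Y=\widehat{D_XY}$, which is literally the assertion of $(B4)$; thus $(B3)$ and $(B4)$ are the same existence statement, and the torsion-freeness claimed in $(B3)$ is automatic: from the torsion-freeness of the canonical connection, $(D_XY-D_YX-[X,Y])^{\mathsf v}=[X^{\mathrm h},Y^{\mathsf v}]-[Y^{\mathrm h},X^{\mathsf v}]-[X,Y]^{\mathsf v}=0$, and $\vlift$ is injective. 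One direction of $(B5)\Leftrightarrow(B3)$ is trivial; for the other I would check that $(D_XY)^{\mathsf v}:=[X^{\mathrm h},Y^{\mathsf v}]$ genuinely defines a covariant derivative operator — a routine computation using $(fX)^{\mathrm h}=(f\circ\tau)X^{\mathrm h}$, $Y^{\mathsf v}(f\circ\tau)=0$ and $X^{\mathrm h}(f\circ\tau)=(Xf)\circ\tau$ to get $C^\infty(M)$-linearity in $X$ and the Leibniz rule in $Y$. This closes the hub: $(B1)$ through $(B7)$ are all equivalent.

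It then remains to link $(B8)$ and $(B9)$ to $(B7)$. For $(B7)\Rightarrow(B8)$: a linear connection equals $\mathcal H_D$ for the covariant derivative $D$ with the same Christoffel symbols (the construction of 3.2(b)); since the canonical connection is torsion-free, so is $D$, by 3.2(b) parallelism with respect to $D$ coincides with $\mathcal H_D$-parallelism, and the canonical connection being compatible with $F$, its parallel translations preserve the Finsler norm (the Note in 4.1). For $(B8)\Rightarrow(B7)$: given a torsion-free $D$ whose parallel translations preserve $F$, the induced $\mathcal H_D$ is torsion-free and homogeneous; combining the lemma of §3 that identifies $D$-parallel and $\mathcal H_D$-parallel vector fields with the compatibility lemma of §3, $\mathcal H_D$ is compatible with $F$, so by the uniqueness of the canonical connection (stated in 4.1) $\mathcal H_D$ \emph{is} the canonical connection — which is thereby linear. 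For $(B7)\Rightarrow(B9)$ one argues identically, applying the $D$-versus-$\mathcal H_D$ lemma to velocity fields to identify $D$-geodesics with the geodesics of $(M,F)$. The reverse implication is the only place a real argument is needed: assuming a torsion-free $D$ whose parametrised geodesics are precisely those of $(M,F)$, I would fix $p\in M$ and $0\neq v\in T_pM$, take the $(M,F)$-geodesic $\gamma$ with $\gamma(0)=p$, $\dot\gamma(0)=v$, note that it is also a $D$-geodesic by hypothesis, and equate the second derivatives at $t=0$ in the two geodesic equations; with Euler's theorem ($G^i_jy^j=2G^i$) this gives $2G^i(v)=\Gamma^i_{jk}(p)v^jv^k$ for all $v\neq 0$, hence, by continuity, for all $v$, so each $G^i$ is a fibrewise quadratic polynomial with smooth coefficients and $S$ is an affine spray, i.e.\ $(B9)\Rightarrow(B6)$. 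Together with the hub this completes the proof.

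I expect two genuine obstacles. First, $(B9)\Rightarrow(B6)$: one has to recognise that agreement of the \emph{parametrised} geodesics, not merely the unparametrised ones, is what forces the full quadratic part of the spray to be polynomial, and that comparing geodesic equations pointwise in all directions is the natural way to extract this. Second, $(B8)\Rightarrow(B7)$ depends essentially on having the uniqueness of the canonical connection available, so that torsion-freeness plus homogeneity plus compatibility with $F$ pin $\mathcal H_D$ down. The verification in $(B5)\Rightarrow(B3)$ that one really obtains a covariant derivative operator is the most calculation-heavy step, but it is entirely standard.
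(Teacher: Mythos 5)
Your proposal is correct and proves all nine equivalences with the same toolbox the paper uses (the coordinate formulae for $G^i_j$, $G^i_{jk}$, Euler's theorem, the lemma identifying $D$-parallelism with $\mathcal H_D$-parallelism, the compatibility lemma, and the uniqueness of the canonical connection), but with a noticeably rearranged implication graph. Where the paper proves (B2)$\Leftrightarrow$(B5) index-free (a vertical field commuting with every vertical lift is itself a vertical lift) and reaches (B6) and (B7) only through the chain (B9)$\Rightarrow$(B6)$\Rightarrow$(B7)$\Rightarrow$(B5), you attach (B6) and (B7) directly to (B1) via the homogeneity relations $G^i_jy^j=2G^i$ and $G^i_{jk}y^k=G^i_j$; this is slightly more economical and makes the role of Euler's theorem transparent, while your coordinate version of (B5)$\Leftrightarrow$(B1) is just the in-coordinates shadow of the paper's commutator argument (and correctly presupposes the routine reduction to coordinate fields). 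You also hang (B8) on (B7) rather than on (B3) as the paper does: both routes turn on exactly the same pivot --- $\mathcal H_D$ is torsion-free, homogeneous and compatible with $F$, hence equals $\mathcal H$ by uniqueness --- and yours avoids the extra bracket computation the paper performs to land at (B3). Finally, your (B9)$\Rightarrow$(B6), equating the two second-order geodesic equations at an arbitrary initial velocity to obtain $2G^i(v)=\Gamma^i_{jk}(p)v^jv^k$, is the coordinate form of the paper's argument that $S=S_D$ because every $v\in\mathring TM$ is the initial velocity of a common geodesic. No step is missing.
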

\begin{proof}
We organize our reasoning according to the following scheme:
\vspace{10pt}
\begin{center}\begin{tabular}{c@{ }c@{ }c@{ }c@{ }c@{ }c@{ }c}
(B1)&$\Longleftrightarrow$&(B2)&$\Longleftrightarrow$
&(B5)&$\Longleftarrow$&(B7)\\
&&&\rotatebox[origin=c]{40}{$\Longleftarrow$}
&\rotatebox[origin=c]{90}{$\Longrightarrow$}&&\rotatebox[origin=c]{90}{$\Longrightarrow$}\\
(B8)&$\Longleftrightarrow$&(B3)&$\Longrightarrow$
&(B4)&&(B6)\\
&&&\rotatebox[origin=c]{-40}{$\Longrightarrow$}
&&\rotatebox[origin=c]{40}{$\Longrightarrow$}&\\[-5pt]
&&&&(B9)&&
\end{tabular}.\end{center}
\vspace{10pt}

(B1)$\iff$(B2)\quad This is obvious, since, as we have just seen, the components of the Berwald curvature are the functions $\frac{\partial G^i_{jk}}{\partial y^l}$.
\vspace{10pt}

(B2)$\Longrightarrow$(B5)\quad If $\mathbf B=0$, then, by 4.2.(c) and 4.3.(iii), for any vector fields $X,Y,Z$ on $M$, we have
\[
\left[\left[X^{\mathrm h},Y^{\mathsf v}\right],Z^{\mathsf v}\right]=0.
\]
Since $X^{\mathrm h}
\raisebox{-2pt}[0pt][-7pt]{$\begin{array}{c}\sim\\[-10pt]\scriptstyle\tau\end{array}$} X$, $Y^{\mathsf v}\raisebox{-2pt}[0pt][-7pt]{$\begin{array}{c}\sim\\[-10pt]\scriptstyle\tau\end{array}$} 0$, $\left[X^{\mathrm h},Y^{\mathsf v}\right]$ is vertical. This vertical vector field commutes with any vertically lifted vector field, which implies easily that $\left[X^{\mathrm h},Y^{\mathsf v}\right]$ is itself a vertical lift.
\vspace{10pt}

(B5)$\Longrightarrow$(B2)\quad If $\left[X^{\mathrm h},Y^{\mathsf v}\right]$ is a vertical lift for each $X,Y\in\mathfrak X(M)$, then for any vector field $Z$ on $M$,
\[
0=\left[Z^{\mathsf v},\left[X^{\mathrm h},Y^{\mathsf v}\right]\right]
=\vlift\mathbf B\big(\widehat Z,\widehat X,\widehat Y\big)
=\vlift\mathbf B\big(\widehat X,\widehat Y,\widehat Z\big),
\]
hence $\mathbf B=0$.
\vspace{10pt}

(B5)$\Longrightarrow$(B3)\quad For any vector fields $X,Y$ on $M$, let
\[
(D_XY)^{\mathsf v}:=\left[X^{\mathrm h},Y^{\mathsf v}\right].
\]
Then the mapping
\[
D:\mathfrak X(M)\times\mathfrak X(M)\to\mathfrak X(M),\ (X,Y)\mapsto D_XY
\]
is well-defined. From the nice properties of the Lie bracket $\left[X^{\mathrm h},Y^{\mathsf v}\right]$ (see the Theorem in section 3 in Crampin's paper \cite{CrBianchi}, or verify it immediately) it follows that $D$ is a covariant derivative operator on $M$ with vanishing torsion.
\vspace{10pt}

$\left.\begin{array}{l}(\textrm{B3})\Longrightarrow\textrm{(B4)}\\\textrm{(B4)}\Longrightarrow\textrm{(B5)}
\end{array}\right\}$ These are obvious since $\vlift\nabla_{X\hl}\kal Y=[X\hl,Y\vl]$.

\vspace{10pt}

(B7)$\Longrightarrow$(B5)\quad We prove this by a simple coordinate calculation, using the local apparatus introduced in 4.3.

Let $X$ and $Y$ be vector fields on $M$,
\[
X\upharpoonright\mathcal U=X^i\frac\partial{\partial u^i},
\ Y\upharpoonright\mathcal U=Y^i\frac\partial{\partial u^i}.
\]
Then, over $\tau^{-1}(\mathcal U)$,
\[
X^{\mathrm h}=\left(X^j\frac\partial{\partial u^j}\right)^{\mathrm h}
=\left(X^j\circ\tau\right)\left(\frac\partial{\partial u^j}\right)^{\mathrm h}
=\left(X^j\circ\tau\right)
\left(\frac\partial{\partial x^j}-G^i_j\frac\partial{\partial y^i}\right),
\]
where the functions $G^i_j$ are the Christoffel symbols of the canonical connection of $(M,F)$. By its linearity,
\[
G^i_j=\left(\Gamma^i_{jl}\circ\tau\right)y^l,
\ \Gamma^i_{jl}\in C^\infty(\mathcal U),
\]
hence
\[
G^i_{jk}=\frac{\partial G^i_j}{\partial y^k}=\Gamma^i_{jk}\circ\tau.
\]
Thus
\begin{align*}
\big[X^{\mathrm h},Y^{\mathsf v}\big]
&=
\left(X^j\circ\tau\right)
\left[\frac\partial{\partial x^j}-G^i_j\frac\partial{\partial y^i},
Y^{\mathsf v}\right]\\
&=\left(X^j\circ\tau\right)
\left(\left[\frac\partial{\partial x^j},
\left(Y^i\circ\tau\right)\frac\partial{\partial y^i}\right]
+\left(Y^{\mathsf v}G^i_j\right)\frac\partial{\partial y^i}\right)\\
&=\left(X^j\circ\tau\right)
\left(\frac{\partial Y^i}{\partial u^j}\circ\tau
+\left(Y^k\circ\tau\right)G^i_{jk}\right)\frac\partial{\partial y^i}\\
&=\left(\left(X^j\frac{\partial Y^i}{\partial u^j}
+X^jY^k\Gamma^i_{jk}\right)\frac\partial{\partial u^i}\right)^{\mathsf v},
\end{align*}
which proves that $\left[X^{\mathrm h},Y^{\mathsf v}\right]$ is a vertical lift.
\vspace{10pt}

(B8)$\Longrightarrow$(B3)\quad As we saw in 3.2.(b), the Ehresmann connection $\mathcal H_D$ determined by $D$ is torsion-free and homogeneous. Lemmas 3, 4 guarantee that $\mathcal H_D$ is compatible with $F$, therefore, by the uniqueness of the canonical connection $\mathcal H$ of $(M,F)$, $\mathcal H_D=\mathcal H$. Thus for any vector fields $X,Y$ on $M$,
\[
X^{\mathrm h}=X^{\mathrm h_D}=X^{\mathsf c}-\overline{DX},
\]
and
\[
\left[X^{\mathrm h},Y^{\mathsf v}\right]=[X^{\mathsf c},Y^{\mathsf v}]+\left[Y^{\mathsf v},\overline{DX}\right]
=[X,Y]^{\mathsf v}+(D_YX)^{\mathsf v}=(D_XY)^{\mathsf v},
\]
which proves the implication.
\vspace{10pt}

(B3)$\Longrightarrow$(B8)\quad Note first that the covariant derivative operator $D$ in (B3) is indeed torsion-free, since the canonical connection $\mathcal H$ of $(M,F)$ is torsion-free.

As in the previous case, consider the Ehresmann connection $\mathcal H_D$. By our condition, and the torsion-freeness of $\mathcal H_D$, for any vector fields $X,Y$ on $M$ we have
\begin{align*}
\left[X^{\mathrm h},Y^{\mathsf v}\right]&=(D_XY)^{\mathsf v}=\left[X^{\mathsf v},\overline{DY}\right]
=[X^{\mathsf v},Y^{\mathsf c}]-\left[X^{\mathsf v},Y^{\mathrm h_D}\right]\\
&=[X,Y]^{\mathsf v}-\left[X^{\mathsf v},Y^{\mathrm h_D}\right]=\left[X^{\mathrm h_D},Y^{\mathsf v}\right].
\end{align*}
So, $X^{\mathrm h}-X^{\mathrm h_D}$ is a vertical vector field which commutes with each vertical lift, thus it is itself a vertical lift. Therefore, it is positive-homogeneous of degree 0 and degree 1 at the same time, which is possible only if $X^{\mathrm h}-X^{\mathrm h_D}=0$.
Thus $\mathcal H_D=\mathcal H$, and $\mathcal H_D$ is compatible with the Finsler function $F$. By Lemma 3, $\mathcal H_D$ generates the same parallelism as $D$, so, in view of Lemma 4, the parallel translations with respect to $D$ preserve the Finsler norms of the tangent vectors. This shows that the covariant derivative $D$ in (B3) satisfies the requirement of (B8).
\vspace{10pt}

(B3)$\Longrightarrow$(B9)\quad Due to the preceding argumentation, we have already known that $\mathcal H_D=\mathcal H$. Since, by Lemma 3 again, the $\mathcal H_D$-geodesics are the same parametrized curves as the $D$-geodesics, (B9) is indeed a consequence of (B3).
\vspace{10pt}

(B9)$\Longrightarrow$(B6)\quad Let $S$ be the canonical spray of $(M,F)$ and $\mathcal H_D$ the Ehresmann connection determined by the given covariant derivative. It can be checked immediately that the mapping
\[
S_D:TM\to TTM,\ v\mapsto S_D(v):=\mathcal H_D(v,v)
\]
is an affine spray. By our condition, it follows that for a regular curve\\ $\gamma:I\to M$,
\[
\ddot\gamma=S\circ\dot\gamma\mbox{ \emph{if and only if} }\ddot\gamma=S_D\circ\dot\gamma.
\]
Since any vector $v\in\mathring TM$ is the initial velocity of an
\[\textrm{\emph{$S$-geodesic = $S_D$-geo\-desic,}}\]
 we conclude that $S=S_D$, and hence $S$ is an affine spray.
\vspace{10pt}

(B6)$\Longrightarrow$(B7) Let $S$ be the canonical spray of $(M,F)$. If $S$ is an affine spray, then its spray coefficients $G^i:\tau^{-1}(\mathcal U)\to\mathbb R$ are of class $C^2$. Since these functions are positive-homogeneous of degree 2, it follows that fibrewise they are quadratic forms. So there exist smooth functions $\Gamma^i_{jk}$ on $\mathcal U$ such that
\[
G^i=\frac 12\left(\Gamma^i_{kl}\circ\tau\right)y^ky^l
\mbox{ \emph{and} }\Gamma^i_{kl}=\Gamma^i_{lk}
\quad(i,k,l\in\{1,\dots,n\}).
\]
Then the Christoffel symbols of the canonical connection $\mathcal H$ of $(M,F)$ are the smooth functions
\[
G^i_j:=\frac{\partial G^i}{\partial y^j}
=\left(\Gamma^i_{jk}\circ\tau\right)y^k,
\]
hence $\mathcal H$ is a linear connection.

This concludes the proof of the Proposition.
\end{proof}
If one, and hence each, of conditions (B1)--(B9) is satisfied, $(M,F)$ is said to be a \emph{Berwald manifold}. The covariant derivative $D$ appearing in (B3), (B4), (B8), (B9) is clearly unique; it will be called the \emph{base covariant derivative} of the Berwald manifold.
\setcounter{unit}0
\section{Aikou's characterization of Berwald manifolds}
\begin{unit}
Let an Ehresmann connection $\Eh:TM\times_MTM\rightarrow TTM$ be given, and let $\Ve$ be the vertical mapping associated to $\Eh$. For every vector field $\xi$ on $\splt M$, we define a kind of \emph{Lie derivative} operator on the tensor algebra of the $C^\infty(\splt M)$-module $\Sec(\splp)$, by prescribing its action
\begin{align*}
&\textrm{on \emph{functions} by }\lie_\xi f:=\xi f,\ f\in C^\infty(TM);\\
&\textrm{on \emph{sections} by }\lie_\xi \hll Y:=\Ve[\xi,\vlift \hll Y],\ \hll Y\in \Sec(\splp),
\end{align*}
and by extending it to the whole tensor algebra in such a way that $\lie_\xi$ satisfies the product rule of tensor derivations. Then, in particular, for any vector fields $X,Y$ on $M$ we get
\[\lie_{X\tl}\kal Y:=\Ve[X\tl,Y\vl]=\Ve[X,Y]\vl=\kal{[X,Y]}=\kal{\lie_XY},\]
so our Lie derivative is a natural extension of the `ordinary Lie derivative' on the base manifold.
\end{unit}

The Lie derivative of a section of $\splp$ with respect to the horizontal lift of a vector field on $M$ also has a nice dynamic interpretation.
\begin{lemma}
Let\/ $\Eh$ be an Ehresmann connection in $TM$, and let $X\hl$ be the $\Eh$-horizontal lift of $X\in\vm(M)$. Then for any section $\hll Y\in\Sec(\splp)$ and tangent vector $u\in \splt M$ we have
\begin{align*}
\big(\lie_{X\hl}\hll Y\big)(u)&=\lim_{t\to 0}\frac{(\varphi_{-t}\hl)_*\hll Y\big(\varphi_t\hl(u)\big)-\hll Y(u)}{t}\\
&=\big(t\longmapsto (\varphi_{-t}\hl)_*\hll Y(\varphi_t\hl(u))\big)'(0),
\end{align*}
where $\varphi:W\subset\mathbb R\times M\rightarrow M$ is the flow generated by $X$, $\varphi\hl$ is the extension of $\varphi$ described in Lemma \ref{lemflow}, and $\varphi_t\hl(u):=\varphi\hl(t,u)$ if $(t,u)$ is in the domain of $\varphi\hl$, and $t$ is fixed.
\end{lemma}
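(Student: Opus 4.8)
The plan is to reduce the statement to the definition of the Lie derivative operator $\lie_{X\hl}$ from unit 6.1, and then to recognize the right-hand side as a Lie-derivative-along-a-flow computation transported to the pull-back bundle via the isomorphism $\vlift$. Recall that $\lie_{X\hl}\hll Y = \Ve[X\hl,\vlift\hll Y]$. So, applying $\vlift$ to both sides of the asserted identity, it suffices to prove that, for the vertical vector field $\eta:=\vlift\hll Y\in\vm\vl(\splt M)$,
\[
\big([X\hl,\eta]\big)(u)=\vlift\Big(\big(t\longmapsto (\varphi_{-t}\hl)_*\,\hll Y(\varphi_t\hl(u))\big)'(0)\Big),
\]
\emph{provided} we check that the curve $t\mapsto (\varphi_{-t}\hl)_*\,\eta(\varphi_t\hl(u))$ stays inside the single fibre $V_u\splt M = \vm\vl(\splt M)(u)$, so that $\vlift$ intertwines the two limit expressions. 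This last point is where the hypothesis that $X\hl$ is a \emph{horizontal lift} enters: by Lemma~\ref{lemflow}, $\varphi\hl$ is the flow of $X\hl$, and $X\hl$ projects onto $X$ under $\tau$; hence $\tau\circ\varphi_t\hl = \varphi_t\circ\tau$, so $(\varphi_t\hl)_*$ maps vertical vectors (i.e.\ $\kr(\tau_*)$) to vertical vectors. Therefore the difference quotient lives in the fixed vector space $V_u\splt M$, on which $\vlift^{-1}$ restricts to a linear isomorphism onto $\Sec(\splp)_u$, commuting with the limit.

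The core identity is then the standard formula expressing the Lie bracket of vector fields as the derivative of a pulled-back field along a flow: if $\varphi\hl$ is the (local) flow of $X\hl$ on $\splt M$, then for every vector field $\eta$ on $\splt M$ and every $u$,
\[
[X\hl,\eta](u)=\frac{d}{dt}\Big|_{t=0}\big((\varphi_{-t}\hl)_*\,\eta(\varphi_t\hl(u))\big)
=\lim_{t\to0}\frac{(\varphi_{-t}\hl)_*\,\eta(\varphi_t\hl(u))-\eta(u)}{t}.
\]
This is nothing but $(\lie_{X\hl}\eta)(u)$ in the ordinary sense on the manifold $\splt M$, and it is a classical fact (see any differential geometry text), so by the conventions of the excerpt I may invoke it. Applying it with $\eta=\vlift\hll Y$ and then translating both sides back through $\vlift^{-1}$ — legitimate by the verticality observation above — yields exactly
\[
\big(\lie_{X\hl}\hll Y\big)(u)=\vlift^{-1}\big([X\hl,\vlift\hll Y](u)\big)=\lim_{t\to0}\frac{(\varphi_{-t}\hl)_*\,\hll Y(\varphi_t\hl(u))-\hll Y(u)}{t},
\]
and the second displayed form in the statement is merely the rewriting of the limit as a derivative at $0$, which needs no separate argument.

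The main obstacle — really the only substantive point — is the bookkeeping in the previous paragraph: making precise that $(\varphi_{-t}\hl)_*\hll Y(\varphi_t\hl(u))$ is to be interpreted as $\vlift$ applied to the section value, and that this interpretation is consistent because $(\varphi_t\hl)_*$ preserves vertical subspaces, so that $\vlift$ (fibrewise a linear isomorphism $\Sec(\splp)_u\to V_u\splt M$) passes through the difference quotient and the limit. Once this compatibility is in place, everything else is the classical bracket-as-flow-derivative identity combined with the \emph{definition} $\lie_{X\hl}\hll Y=\Ve[X\hl,\vlift\hll Y]$ together with the fact that $\Ve$ is left inverse to $\vlift$ on vertical vector fields (immediate from $\Ve(X\vl)=\kal X$ and $\vlift\kal X=X\vl$). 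I would also remark, as a sanity check already built into the formalism, that for $\hll Y=\kal Y$ basic and $X\hl=X\tl$ the formula specializes to the usual expression of $[X,Y]$ as the flow-derivative of $Y$, consistently with the identity $\lie_{X\tl}\kal Y=\kal{[X,Y]}$ noted at the end of unit~6.1.
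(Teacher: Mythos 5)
Your proof is correct and follows essentially the same route as the paper: both reduce the claim to the definition $\lie_{X\hl}\hll Y=\Ve[X\hl,\vlift\hll Y]$, invoke Lemma~6 to identify $\varphi\hl$ as the flow of $X\hl$, apply the classical flow-derivative formula for the Lie bracket to the vertical field $\vlift\hll Y$, and pass $\Ve$ (equivalently $\vlift^{-1}$) through the limit. Your explicit remark that $(\varphi_t\hl)_*$ preserves vertical subspaces because $\tau\circ\varphi_t\hl=\varphi_t\circ\tau$ is exactly the justification the paper compresses into its preliminary identification of $((\varphi_t\hl)_*)_u$ with a map $T_pM\to T_{\varphi_t(p)}M$.
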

\begin{proof} (1) $\varphi_t\hl$ is a diffeomorphism between two open subsets of $\splt M$. However, for any vector $u$ in the domain of $\varphi_t\hl$, we may consider the derivative $((\varphi_t\hl)_*)_u$ as a mapping from $T_pM$ onto $T_{\varphi_t(p)}M$, where $p:=\tau(u)$, identifying $\varphi_t\hl$ with its restriction to $\splt_pM$, and identifying also the tangent spaces of a tangent space to $M$ with the tangent space itself. This interpretation of the derivative of $\varphi_t\hl$ will be applied automatically in what follows.

(2) Since, by Lemma \ref{lemflow}, $\varphi\hl$ is the flow generated by $X\hl$, our claim is an easy consequence of the dynamic interpretation of the ordinary Lie derivative:
\begin{align*}
\big(\lie_{X\hl}\hll Y\big)(u):&=\Ve[X\hl,\vlift\hll Y](u)=\\
&=\Ve\lim_{t\to 0}\frac{(\varphi_{-t}\hl)_*(\vlift\hll Y)(\varphi_t\hl(u))-(\vlift\hll Y)(u)}{t}\\
&=\lim_{t\to 0}\frac{(\varphi_{-t}\hl)_*\hll Y(\varphi_t\hl(u))-\hll Y(u)}{t},
\end{align*}
taking into account the linearity of $\vlift$ and the obvious relation \\ $\Ve\circ \vlift=1_{\Sec(\splp)}$.
\end{proof}
\begin{lemma}\label{lemb}
Hypothesis and notation as above. If \[b:\Sec(\splp)\times\Sec(\splp)\rightarrow C^\infty(\splt M)\]
is a type $(0,2)$ tensor, then
\[\lie_{X\hl}b=\lim_{t\to 0}\frac{(\varphi_t\hl)^*b-b}{t},\]
or, more precisely, for any vectors $u\in\splt M$; $v,w\in T_{\tau(u)}M$,
\[\tag{$\ast$} \big(\lie_{X\hl}b\big)_u(v,w)=\lim_{t\to 0}\frac{\big((\varphi_t\hl)^*b\big)_u(v,w)-b_u(v,w)}{t},\]
where $(\varphi_t\hl)^*$ denotes pull-back, given by
\[\big((\varphi_t\hl)^*b\big)_u(v,w):=b_{\varphi_t\hl(u)}\big(((\varphi_t\hl)_*)_u(v),((\varphi_t\hl)_*)_u(w)\big).\]
\end{lemma}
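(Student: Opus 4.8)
The plan is to turn the tensorial formula $(\ast)$ into a pointwise computation in a local frame, exploit the product rule satisfied by the tensor derivation $\lie_{X\hl}$, and then read off the individual terms from the preceding Lemma and from the fact (Lemma~\ref{lemflow}) that $\varphi\hl$ is the flow of $X\hl$.

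The first step is a reduction. For a fixed $u\in\splt M$, both sides of $(\ast)$ are bilinear functions of $(v,w)\in T_{\tau(u)}M\times T_{\tau(u)}M$: the left-hand side because $\lie_{X\hl}b$ is again a type $(0,2)$ tensor, the right-hand side because the difference quotient $\frac1t\big(((\varphi_t\hl)^*b)_u(v,w)-b_u(v,w)\big)$ is manifestly bilinear in $(v,w)$ and bilinearity passes to the limit. Hence it suffices to verify $(\ast)$ for $v=\kal{X_i}(u)$ and $w=\kal{X_j}(u)$, where $(X_i)_{i=1}^n$ is a local frame of $TM$ around $p:=\tau(u)$; write $b_{ij}:=b(\kal{X_i},\kal{X_j})\in C^\infty(\splt M)$.

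Next I would expand both sides. On the left, the product rule for $\lie_{X\hl}$ (together with $\lie_{X\hl}f=X\hl f$ on functions) gives
\[
(\lie_{X\hl}b)(\kal{X_i},\kal{X_j})=X\hl(b_{ij})-b\big(\lie_{X\hl}\kal{X_i},\kal{X_j}\big)-b\big(\kal{X_i},\lie_{X\hl}\kal{X_j}\big),
\]
so everything hinges on the value at $u$ of the section $\lie_{X\hl}\kal{X_i}\in\Sec(\splp)$. On the right, writing $\psi_t:=\varphi_t\hl$ and expanding the pushed-forward frame vectors as $((\psi_t)_*)_u\big(X_i(p)\big)=\xi_i^k(t)\,X_k(\varphi_t(p))$ with $\xi_i^k(0)=\delta_i^k$, the $C^\infty(\splt M)$-bilinearity of $b$ turns $\big((\psi_t)^*b\big)_u(\kal{X_i}(u),\kal{X_j}(u))$ into $\xi_i^k(t)\xi_j^l(t)\,b_{kl}(\psi_t(u))$; differentiating at $t=0$, and using that $\psi$ is the flow of $X\hl$ so that $\frac{d}{dt}\big|_0 f(\psi_t(u))=(X\hl f)(u)$, yields $\dot\xi_i^k(0)\,b_{kj}(u)+\dot\xi_j^l(0)\,b_{il}(u)+(X\hl b_{ij})(u)$. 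Matching this against the left-hand side, $(\ast)$ reduces to the single identity $(\lie_{X\hl}\kal{X_i})(u)=-\dot\xi_i^k(0)\,\kal{X_k}(u)$, and this is exactly what the preceding Lemma delivers: applying it to the basic section $\kal{X_i}$ and re-expressing $(\psi_{-t})_*\kal{X_i}(\psi_t(u))$ in the frame --- i.e.\ inverting the matrix $(\xi_i^k(t))$ --- gives $(\lie_{X\hl}\kal{X_i})(u)=\frac{d}{dt}\big|_0\big((\xi(t))^{-1}\big)^k_i\,\kal{X_k}(u)=-\dot\xi_i^k(0)\,\kal{X_k}(u)$, the last equality because $\xi(0)=\mathrm{id}$ forces $\frac{d}{dt}\big|_0(\xi^{-1})=-\dot\xi(0)$.

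The part I expect to be the real obstacle is not any single computation but bookkeeping the two identifications that pervade the argument --- treating a fibre vector of $\splp$ over a point of $\splt M$ as an honest tangent vector of $M$, and the interpretation of $((\psi_t)_*)_u$ as a linear isomorphism $T_pM\to T_{\varphi_t(p)}M$ set up in part~(1) of the proof of the preceding Lemma --- carefully enough that the functions $\xi_i^k$ appearing on the two sides of the comparison are literally the same, so the preceding Lemma can be quoted verbatim. A more conceptual route that avoids the frame computation altogether: one checks $\vlift(\lie_{X\hl}\hll Y)=[X\hl,\vlift\hll Y]$ (because $[X\hl,\vlift\hll Y]$ is vertical and $\vlift\circ\Ve$ restricts to the identity on vertical vector fields) and that the flow of $X\hl$ preserves the vertical distribution (since $\tau\circ\varphi_t\hl=\varphi_t\circ\tau$); together these identify $\lie_{X\hl}$ on $(0,2)$ tensors over $\Sec(\splp)$ with the ordinary Lie derivative along $X\hl$ of the corresponding tensors on $\splt M$ evaluated on vertical arguments, so $(\ast)$ collapses to the classical dynamic formula for the Lie derivative of a $(0,2)$ tensor, once more via Lemma~\ref{lemflow}.
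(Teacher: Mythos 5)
Your proof is correct, and it is built from the same ingredients as the paper's: the product-rule expansion of $\lie_{X\hl}b$, the dynamic formula for $\lie_{X\hl}$ on sections supplied by the preceding Lemma, and Lemma~\ref{lemflow} identifying $\varphi\hl$ as the flow of $X\hl$. The genuine difference lies in the choice of test sections. The paper inserts basic sections $\kal Y,\kal Z$ whose generating vector fields are adapted to the flow, i.e.\ $Y(\varphi_t(p))=((\varphi_t\hl)_*)_u(v)$ and likewise for $Z$; then $(\varphi_{-t}\hl)_*Y(\varphi_t(p))\equiv v$ is constant in $t$, the two correction terms $b\big(\lie_{X\hl}\kal Y,\kal Z\big)$ and $b\big(\kal Y,\lie_{X\hl}\kal Z\big)$ vanish at $u$, and the whole expression collapses in one line to $\big(t\mapsto((\varphi_t\hl)^*b)_u(v,w)\big)'(0)$; the price is that such adapted extensions are produced along an embedded integral curve, so the case $X(p)=0$ is split off and disposed of separately. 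You instead take an arbitrary local frame $(X_i)$ and track the transition matrix $\xi(t)$ of $((\varphi_t\hl)_*)_u$, checking that the correction terms $\dot\xi_i^k(0)\,b_{kj}(u)+\dot\xi_j^l(0)\,b_{il}(u)$ arise identically on both sides of $(\ast)$; this costs the (routine) computation with $\xi(t)^{-1}$ and $\dot\xi(0)$, but is uniform in $u$ and needs no case distinction. Both routes are sound: the paper's is shorter and slicker, yours is more mechanical and marginally more robust, and your closing observation that the statement also follows from the classical dynamic formula for the ordinary Lie derivative on $\splt M$ restricted to vertical arguments is a legitimate, more conceptual alternative, though as stated it is only a sketch.
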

\begin{proof}
Let, for brevity, $p:=\tau(u)$. If $X(p)=0$, then both sides of $(\ast)$ vanish. Otherwise, there is a positive real number $\varepsilon$ and there are vector fields $Y$, $Z$ on $M$ such that
\[Y(\varphi_t(p))=((\varphi_t\hl)_*)_u(v)\textrm{ \emph{and} } Z(\varphi_t(p))=((\varphi_t\hl)_*)_u(w),\]
whenever $|t|<\varepsilon$. Hence, identifying the basic sections $\kal Y,\kal Z$ with their `principal parts' $Y\circ\tau,Z\circ\tau$, and applying the previous lemma, we obtain:
\begin{align*}
\big(\lie_{X\hl}b\big)_u(v,w)&=\big(\lie_{X\hl}b\big)(\kal Y,\kal Z)(u)=\\
&=\big(X\hl b(\kal Y,\kal Z)-b(\lie_{X\hl}\kal Y,\kal Z)-b(\kal Y,\lie_{X\hl}\kal Z)\big)(u)\\
&=\big(t\longmapsto b_{\varphi_t\hl(u)}\big(Y(\varphi_t(p)),Z(\varphi_t(p))\big)\big)'(0)-\\
&-\big(t\longmapsto b_u\big((\varphi_{-t}\hl)_*Y(\varphi_t(p)),w\big)+b_u\big(v,(\varphi_{-t}\hl)_*Z(\varphi_t(p))\big)\big)'(0)\\
&=\big(t\longmapsto b_{\varphi_t\hl(u)}\big(((\varphi_t\hl)_*)_u(v),((\varphi_t\hl)_*)_u(w)\big)-2b_u(v,w)\big)'(0)\\
&=\big(t\longmapsto((\varphi_t\hl)^*b)_u(v,w)\big)'(0)\\
&=\lim_{t\to0}\frac{((\varphi_t\hl)^*b)_u(v,w)-b_u(v,w)}{t}.
\end{align*}
\end{proof}
\begin{cor}
\emph{If $(M,F)$ is a Berwald manifold, $\Eh$ is its canonical connection, then the metric tensor of $(M,F)$ is constant along the flow generated by any $\Eh$-horizontal lift $Z\hl$ of $Z\in\vm(M)$.}
\end{cor}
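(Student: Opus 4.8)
The plan is to prove the stronger statement that the metric tensor is genuinely invariant under the (local) flow $\varphi\hl$ of $Z\hl$, i.e.\ that $(\varphi_t\hl)^*g=g$ in the sense of Lemma~\ref{lemb}; this is what ``constant along the flow'' means, and it yields $\lie_{Z\hl}g=0$ at once. Fix $v\in\mathring TM$, put $p:=\tau(v)$, let $\gamma$ be the integral curve of $Z$ with $\gamma(0)=p$, and let $t$ lie in the domain of the flow. By Lemma~\ref{lemflow} combined with Lemma~\ref{lemparal}, $\varphi_t\hl$ carries $\mathring T_pM$ onto $\mathring T_{\gamma(t)}M$ and coincides there with the $\Eh$-parallel translation $P:=(P_\gamma)_0^t$; in particular $\varphi_t\hl(v)=P(v)$. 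None of this uses the Berwald hypothesis.

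Now I bring in the assumption. Since $(M,F)$ is Berwald, condition (B7) tells us that the canonical connection $\Eh$ is a linear connection, hence the parallel-transport equations (P) are linear in the transported vector and $P$ is the restriction of a \emph{linear} isomorphism $T_pM\to T_{\gamma(t)}M$. Consequently, with the derivative $((\varphi_t\hl)_*)_v$ interpreted as in Lemma~\ref{lemb} (and under the usual identification of the tangent space to a vector space with the vector space itself), $((\varphi_t\hl)_*)_v$ is again $P$, since the derivative of a linear map is that map. Therefore $\big((\varphi_t\hl)^*g\big)_v(w_1,w_2)=g_{P(v)}(Pw_1,Pw_2)$ for $w_1,w_2\in T_pM$, and the assertion at $v$ reduces to the equality $g_{P(v)}(Pw_1,Pw_2)=g_v(w_1,w_2)$. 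I also record that $P$ preserves Finsler norms: the canonical connection is homogeneous and compatible with $F$ (Fact~2), so Lemma~2 gives $F\circ P=F$, equivalently $F_{\gamma(t)}\circ P=F_p$ with $F_q:=F|_{T_qM}$ --- a fact valid on every Finsler manifold.

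The last step upgrades norm preservation to metric preservation. By (F4) and the coordinate formula $g_{ij}=\tfrac12\,\partial^2F^2/\partial y^i\partial y^j$ of 4.3(i), the symmetric bilinear form $g_w$ on $T_pM$ is exactly the Hessian at $w$ of the function $\tfrac12 F_p^2$ on $\mathring T_pM$, for every $w\in\mathring T_pM$; likewise $g_{P(v)}$ is the Hessian of $\tfrac12 F_{\gamma(t)}^2$ at $P(v)$. As $P$ is linear and $\tfrac12 F_p^2=\tfrac12 F_{\gamma(t)}^2\circ P$, the chain rule for second derivatives gives
\[
g_v(w_1,w_2)=\mathrm{Hess}_v\!\big(\tfrac12 F_{\gamma(t)}^2\circ P\big)(w_1,w_2)=\mathrm{Hess}_{P(v)}\!\big(\tfrac12 F_{\gamma(t)}^2\big)(Pw_1,Pw_2)=g_{P(v)}(Pw_1,Pw_2),
\]
which is exactly the identity needed to conclude $(\varphi_t\hl)^*g=g$. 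The crux --- and the only delicate point --- is the linearity of $P$: it holds \emph{precisely because} $(M,F)$ is Berwald, and it is what lets norm preservation propagate to the whole second-order jet of $F^2$, hence to $g$. On a general Finsler manifold parallel translations are only positive-homogeneous diffeomorphisms, and the direction-dependence of the connection parameters is exactly the obstruction to their preserving $g$.
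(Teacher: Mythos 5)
Your proof is correct, but it takes a genuinely different route from the paper's. The paper argues infinitesimally: it expands $(\lie_{Z\hl}g)(\kal X,\kal Y)$ by the Leibniz rule, uses (B3) twice -- once to replace $\Ve[Z\hl,X\vl]$ by $\kal{D_ZX}$, once to replace $(D_ZX)\vl$ back by $[Z\hl,X\vl]$ -- and collapses everything to $\tfrac12 X\vl Y\vl(Z\hl F^2)=0$ by compatibility of $\Eh$ with $F$; Lemma~\ref{lemb} is then only needed to read off constancy along the flow from $\lie_{Z\hl}g=0$. You instead integrate: you identify the flow $\varphi_t\hl$ fibrewise with parallel translation via Lemmas~\ref{lemparal} and~\ref{lemflow}, invoke (B7) to conclude that this translation $P$ is linear, and push the norm preservation $E_p=E_{\gamma(t)}\circ P$ through two fibre derivatives to obtain $(\varphi_t\hl)^*g=g$ exactly, not merely to first order in $t$. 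All the ingredients check out: the identification of $\bigl((\varphi_t\hl)_*\bigr)_v$ with $P$ is exactly the convention set up in the proof of Lemma~\ref{lemb}, the Hessian description of $g$ follows from 4.3(i) since the $y^i$ are linear fibre coordinates, and the chain rule for the linear $P$ does the rest (the reference to ``Lemma~2'' for norm preservation should be to the compatibility lemma of 3.5, a harmless numbering slip). Your version yields the stronger conclusion of genuine flow invariance and makes transparent why Berwaldness is the right hypothesis -- linearity of $P$ is precisely what promotes a statement about $F^2$ itself to one about its fibrewise second jet -- while the paper's version is shorter, purely algebraic, and needs no ODE or flow machinery beyond the interpretation of the Lie derivative.
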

\begin{proof}
By Lemma \ref{lemb}, it is enough to check that $\lie_{Z\hl}g=0$. Choosing two vector fields $X$, $Y$ on $M$, we calculate:
\begin{align*}
(\lie_{Z\hl}g)(\kal X,\kal Y)&=Z\hl g(\kal X,\kal Y)-g(\lie_{Z\hl}\kal X,\kal Y)-g(\kal X,\lie_{Z\hl}\kal Y)=\\
&=\frac{1}{2}Z\hl(X\vl Y\vl F^2)-g(\Ve[Z\hl,X\vl],\kal Y)-g(\kal X,\Ve[Z\hl,Y\vl])\mathop{=}^{\mathrm{(B3)}}\\
&=\frac{1}{2}Z\hl(X\vl Y\vl F^2)-g\big(\kal{D_ZX},\kal Y\big)-g\big(\kal X,\kal{D_ZY}\big)=\\
&=\frac{1}{2}\big(Z\hl(X\vl Y\vl F^2)-(D_ZX)\vl(Y\vl F^2)-X\vl((D_ZY)\vl F^2)\big)\mathop{=}^{\mathrm{(B3)}}\\
&=\frac{1}{2}\big(Z\hl(X\vl Y\vl F^2)-[Z\hl,X\vl](Y\vl F^2)-X\vl([Z\hl,Y\vl]F^2)\big)=\\
&=\frac{1}{2}X\vl Y\vl(Z\hl F^2)=0,
\end{align*}
since, by the compatibility of $\Eh$ and $F$, $Z\hl F^2=2FZ\hl F=0$.
\end{proof}
\begin{prop}[T.~Aikou \cite{Aikou}] \emph{A Finsler manifold
is a Berwald manifold if, and only if, there exists a Riemannian metric $g_M$ on $M$ such that for every vector field $Z$ on $M$, $\lie_{Z\hl}\kal g_M=0$. Here the horizontal lift is taken with respect to the canonical connection of $(M,F)$, and $\kal g_M$ is the natural lift of $g_M$ into $\mathsf T_2^0(\Sec(\splp))$ given on basic sections by
\[\kal g_M(\kal X,\kal Y):=(g_M(X,Y))\vl;\quad X,Y\in\vm(M).\]}
\end{prop}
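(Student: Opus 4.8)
\emph{Plan of proof.} The plan is to translate both directions into statements about the Berwald derivative $\nabla$ and the Levi-Civita connection of $g_M$, after recording one bookkeeping identity. For every $Z\in\vm(M)$ the horizontal lift $Z\hl$ is $\tau$-related to $Z$ by (C2), so $Z\hl(f\vl)=(Zf)\vl$ for all $f\in C^\infty(M)$; in particular $Z\hl\big(\kal g_M(\kal X,\kal Y)\big)=\big(Z\,g_M(X,Y)\big)\vl$ for $X,Y\in\vm(M)$. Moreover, by the definition of the Lie derivative along $\splp$ given in 6.1 and the description of the Berwald derivative in 4.2(b), $\lie_{Z\hl}\kal X=\Ve[Z\hl,X\vl]=\nabla_{Z\hl}\kal X$. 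Feeding these into the product rule for $\lie_{Z\hl}$ applied to $\kal g_M$ and the basic sections $\kal X,\kal Y$ gives, for all $X,Y,Z\in\vm(M)$,
\begin{equation*}
\big(\lie_{Z\hl}\kal g_M\big)(\kal X,\kal Y)=\big(Z\,g_M(X,Y)\big)\vl-\kal g_M\big(\nabla_{Z\hl}\kal X,\kal Y\big)-\kal g_M\big(\kal X,\nabla_{Z\hl}\kal Y\big).\tag{$\dagger$}
\end{equation*}

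Suppose first that $(M,F)$ is a Berwald manifold, and let $D$ be its base covariant derivative; then $D$ is torsion-free and, by (B4), $\nabla_{Z\hl}\kal X=\kal{D_ZX}$ for all $X,Z\in\vm(M)$. By Szab\'o's metrization theorem \cite{Szabo} --- or, for a self-contained argument, by the averaged metric construction of Section~7, which applies already to the classical case --- $D$ is the Levi-Civita connection of a Riemannian metric $g_M$ on $M$, so $Dg_M=0$. Substituting $\nabla_{Z\hl}\kal X=\kal{D_ZX}$ and $\kal g_M(\kal{D_ZX},\kal Y)=\big(g_M(D_ZX,Y)\big)\vl$ into $(\dagger)$ collapses its right-hand side to $\big((D_Zg_M)(X,Y)\big)\vl=0$; since the basic sections generate $\Sec(\splp)$, this yields $\lie_{Z\hl}\kal g_M=0$ for this $g_M$.

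Conversely, suppose a Riemannian metric $g_M$ on $M$ with $\lie_{Z\hl}\kal g_M=0$ is given. By $(\dagger)$, this says exactly that the horizontal part of the Berwald derivative is metric with respect to $\kal g_M$, i.e. $\kal g_M(\nabla_{Z\hl}\kal X,\kal Y)+\kal g_M(\kal X,\nabla_{Z\hl}\kal Y)=\big(Z\,g_M(X,Y)\big)\vl$. On the other hand, the canonical connection being torsion-free, Lemma~\ref{lemBnab} gives the symmetry $\nabla_{X\hl}\kal Y-\nabla_{Y\hl}\kal X=\kal{[X,Y]}$. These are precisely the two inputs the Koszul formula requires, so the usual manipulation produces
\begin{equation*}
\kal g_M\big(\nabla_{Z\hl}\kal X,\kal Y\big)=\big(g_M(D_ZX,Y)\big)\vl=\kal g_M\big(\kal{D_ZX},\kal Y\big),
\end{equation*}
where $D$ is the Levi-Civita connection of $g_M$. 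Since $\kal g_M$ is fibrewise nondegenerate and basic sections generate $\Sec(\splp)$ over $C^\infty(\splt M)$, this forces $\nabla_{Z\hl}\kal X=\kal{D_ZX}$, i.e. $\vlift\nabla_{Z\hl}\kal X=(D_ZX)\vl$ for all $X,Z\in\vm(M)$ --- which is condition (B4). Hence $(M,F)$ is a Berwald manifold.

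The steps above --- the identity $(\dagger)$, the reduction $\lie_{Z\hl}\kal X=\nabla_{Z\hl}\kal X$, and the Koszul computation --- are routine once the apparatus of Sections~2--5 is available. The real obstacle sits in the ``only if'' part: producing a \emph{smooth}, $D$-\emph{parallel} Riemannian metric on $M$ from the torsion-free base connection $D$ alone. Classically this is Szab\'o's metrization theorem; the self-contained replacement --- averaging the Finsler metric tensor over the indicatrices (which $D$-parallel translation preserves, by (B8)), or assigning to each tangent space the Loewner ellipsoid of its unit ball --- is exactly what Section~7 develops, so here I would invoke that construction and defer the smoothness verification to that section.
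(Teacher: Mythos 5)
Your proposal is correct and follows essentially the same route as the paper: the same product-rule identity for $\lie_{Z\hl}\kal g_M$ on basic sections, the same Koszul-formula manipulation (using Lemma~\ref{lemBnab} for the symmetry) to force $\vlift\nabla_{Z\hl}\kal X=(D_ZX)\vl$ and hence (B4) in the sufficiency direction, and the same appeal to Szab\'o's metrization (completed by the Section~7 construction) together with $Dg_M=0$ in the necessity direction. The only difference is that you compress the cyclic-permutation step into ``the usual Koszul manipulation,'' which the paper writes out in full.
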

\begin{proof}
\emph{Sufficiency.} Let $X$ and $Y$ be vector fields on $M$. Then, as above,
\begin{align*}
(\lie_{Z\hl}\kal g_M)(\kal X,\kal Y)&=Z\hl\kal g_M(\kal X,\kal Y)-\kal g_M\big(\Ve[Z\hl,X\vl],\kal Y\big)-\\
&-\kal g_M\big(\kal X,\Ve[Z\hl,Y\vl]\big)=Z\hl\big(g_M(X,Y)\big)\vl-\\
&-\kal g_M(\nabla_{Z\hl}\kal X,\kal Y)-\kal g_M\big(\kal X,\nabla_{Z\hl}\kal Y).
\end{align*}
Here, as it can be seen at once,
\[Z\hl\big(g_M(X,Y)\big)\vl=\big(Zg_M(X,Y)\big)\vl,\]
so by our condition $\lie_{Z\hl}\kal g_M=0$, it follows that
\[\kal g_M(\nabla_{z\hl}\kal X,\kal Y)+\kal g_M(\kal X,\nabla_{Z\hl}\kal Y)=\big(Zg_M(X,Y)\big)\vl.\]
Permuting $Z$, $X$ and $Y$ cyclically, we obtain
\begin{align*}
\kal g_M(\nabla_{X\hl}\kal Y,\kal Z)+\kal g_M(\kal Y,\nabla_{X\hl}\kal Z)&=\big(Xg_M(Y,Z)\big)\vl,\\
\kal g_M(\nabla_{Y\hl}\kal Z,\kal X)+\kal g_M(\kal Z,\nabla_{Y\hl}\kal X)&=\big(Yg_M(Z,X)\big)\vl.
\end{align*}
Adding both sides of the last two relations and subtracting the preceding one, we find
\begin{align*}\kal g_M\big(&\nabla_{X\hl}\kal Y+\nabla_{Y\hl}\kal X,\kal Z\big)+\kal g_M\big(\nabla_{Y\hl}\kal Z-\nabla_{Z\hl}\kal Y,\kal X\big)+\kal g_M\big(\nabla_{X\hl}\kal Z-\nabla_{Z\hl}\kal X,\kal Y\big)\\
&=\big(Xg_M(Y,Z)+Yg_M(Z,X)-Zg_M(X,Y)\big)\vl.
\end{align*}
By Lemma \ref{lemBnab}, the left-hand side of this relation can be written in the form
\[\kal g_M\big(2\nabla_{X\hl}\kal Y-\kal{[X,Y]},\kal Z\big)+\kal g_M\big(\kal{[Y,Z]},\kal X\big)+\kal g_M\big(\kal{[X,Z]},\kal Y\big),\]
so we obtain
\begin{align*}
2\kal g_M(\nabla_{X\hl}\kal Y,\kal Z)&=\big(Xg_M(Y,Z)+Yg_M(Z,X)-Zg_M(X,Y)\big)\vl+\\
&+\big(-g_M(X,[Y,Z])+g_M(Y,[Z,X])+g_M(Z,[X,Y])\big)\vl.
\end{align*}
If $D$ is the Levi-Civita derivative of $(M,g_M)$, then, by the Koszul formula, the right-hand side of the last relation is just
\[2\big(g_M(D_XY,Z)\big)\vl=2\kal g_M(\kal{D_XY},\kal Z),\]
hence
\[\kal g_M\big(\nabla_{X\hl}\kal Y-\kal{D_XY},\kal Z\big)=0\textrm{ \emph{for all} } X,Y,Z\in\vm(M).\]
This implies that
\[\vlift \nabla_{X\hl}\kal Y=(\nabla_XY)\vl\textrm{ \emph{for all} }X,Y\in\vm(M),\]
whence, by (B4), $(M,F)$ is a Berwald manifold.

\par\noindent\emph{Necessity.} Assume that $(M,F)$ is a positive definite Berwald manifold. By a celebrated observation of Z.~I.~Szab\'o \cite{Szabo}, there exists a Riemannian metric $g_M$ on $M$ whose Levi-Civita derivative is the base covariant derivative $D$ of $(M,F)$. (For an instructive, quite recent proof of this fact we refer to Vincze's paper \cite{Vincze}. In the next section
we shall see that it works under more general assumptions.)

$g_M$ satisfies our requirement: for any vector fields $X,Y,Z$ on $M$ we have
\begin{align*}
(\lie_{Z\hl}\kal g_M)&(\kal X,\kal Y)=Z\hl\kal g_M(\kal X,\kal Y)-\kal g_M(\lie_{Z\hl}\kal X,\kal Y)-\kal g_M(\kal X,\lie_{Z\hl}\kal Y)=\\
&=\big(Zg_M(X,Y)\big)\vl-\kal g_M(\Ve[Z\hl,X\vl],\kal Y)-\kal g_M(\kal X,\Ve[Z\hl,Y\vl])\mathop{=}^{\textrm{(B3)}}\\
&=\big(Zg_M(X,Y)-g_M(D_ZX,Y)-g_M(X,D_ZY)\big)\vl=\\
&=\big(Dg_M(Z,X,Y)\big)\vl=0,
\end{align*}
since $D$ is a metric derivative on $(M,g_M)$.
\end{proof}

\setcounter{section}6
\setcounter{lemma}{10}
\setcounter{unit}0
\section{On Matveev's generalization of Berwald manifolds}
\begin{unit}
In what follows, by a \emph{vector space} we shall mean a finite dimensional (but non-trivial) real vector space endowed with the canonical linear topology. Sometimes, tacitly, we also assume that the considered $n$-dimensional vector space $V$ is a manifold whose smooth structure is defined by a linear bijection $V\rightarrow \RR^n$.

We recall that in the context of these vector spaces we have a natural and efficient concept of differentiability of mappings. Namely, let $V$ and $W$ be vector spaces, and let $\lin(V,W)$ be the vector space of linear mappings of $V$ into $W$. Let $\uhz$ be an open subset of $V$. A mapping $\varphi:\uhz\rightarrow W$ is called \emph{differentiable at a point} $p\in\uhz$ if for some $\varphi'(p)\in\lin(V,W)$
\[\lim_{t\to 0}\frac{\varphi(p+tv)-\varphi(p)}{t}=\varphi'(p)(v),\quad v\in V.\]
$\varphi:\uhz\rightarrow W$ is \emph{differentiable} if it is differentiable at every point of $\uhz$; then its \emph{derivative} is the mapping
\[\varphi':\uhz\longrightarrow \lin(V,W),\quad p\longmapsto \varphi'(p).\]
$\varphi$ is \emph{twice differentiable} if $\varphi'$ is differentiable; the derivative of $\varphi'$ is a mapping
\[\varphi'':\uhz\longrightarrow\lin\big(V,\lin (V,W)\big),\quad p\longmapsto \varphi''(p),\]
called the \emph{second derivative} of $\varphi$. Here $\lin(V,\lin(V,W))$ may be canonically identified with the vector space $\lin^2(V,W)$ of bilinear mappings \mbox{$V\times V\rightarrow W$}. For further fine details we refer to \cite{Ker}, Ch.1.
\end{unit}
\begin{unit}
Let $V$ and $W$ be vector spaces, $r$ a real number, and $\uhz$ a (nonempty) subset of $V$. A mapping $\varphi:\uhz\rightarrow W$ is said to be \emph{positive-homogeneous of degree $r$}, briefly \emph{$r^+$-homogeneous}, if for each positive real number $\lambda$ and each $v\in\uhz$,
\[\lambda v\in\uhz\textrm{\emph{ and }} \varphi(\lambda v)=\lambda^r\varphi(v).\]

If, in particular, $\uhz$ is an open subset of $V$ with the property that $\lambda\uhz\subset\uhz$ for all positive $\lambda\in\RR$, and $f:\uhz\rightarrow\RR$ is a differentiable function, then, as it has been observed by Euler,
\[\textrm{\emph{$f$ is $r^+$-homogeneous if, and only if, }}f'(v)(v)=rf(v)\textrm{\emph{ for all }} v\in\uhz.\]
\end{unit}
\begin{unit}
We canonically identify the tangent space $T_pV$ of a vector space $V$ at a point $p$ with $V$. Then a (smooth) vector field on an open subset $\uhz$ of $V$ is just a smooth mapping $X:\uhz\rightarrow V$. As in the general theory of manifolds, we denote by $\vm(\uhz)$ the $C^\infty(\uhz)$-module of vector fields on $\uhz$. \emph{$\vm(\uhz)$ is generated by the constant vector fields} of the form
\[X:\uhz\longrightarrow V,\quad p\longmapsto X(p):=v\textrm{\emph{ for all }} p\in\uhz.\]
We denote by $Z_\uhz$ the \emph{radial vector field}
\[\uhz\longrightarrow V,\quad Z_\uhz(p):=p.\]
It plays the same role as the Liouville vector field (2.4) in the general theory.  For simplicity, the suffix $\uhz$ will be omitted.

If $f$ is a differentiable function on $\uhz$ and $X\in\vm(\uhz)$, then
\[(Xf)(p)=X(p)f=f'(p)(X(p)),\textrm{\emph{ for all }}p\in \uhz.\]
In particular,
\[(Zf)(p)=f'(p)(p),\ p\in \uhz.\]
It follows that \emph{$f$ is $r^+$-homogeneous if and only if ($\lambda\uhz\subset \uhz$ for all positive $\lambda\in\RR$ and) $Zf=rf$}.

Now let $f\in C^\infty(\uhz)$; $X,Y\in\vm(\uhz)$. Then at each point $p\in\uhz$,
\[X(Yf)(p)=f''(p)\big(X(p),Y(p)\big)+Y'(p)(X(p))(f)\]
therefore
\[[X,Y](p)=Y'(p)(X(p))-X'(p)(Y(p)).\]
From this we see that \emph{if $X$ is a constant vector field on $\uhz$ and $Z\in\vm(\uhz)$ is the radial vector field, then} $[X,Z]=X$.
\end{unit}
\begin{unit}
Let $V$ be a vector space, $k$ a positive integer, and let $\lin^k(V)$ denote the vector space of $k$-linear real-valued functions on $V$. If $\uhz$ is an open subset of $V$, then any $k$-linear function $A\in\lin^k(V)$ may be interpreted as a type $(0,k)$ \emph{tensor field} whose value $A_p$ at a point $p\in\uhz$ is just $A$, i.e.,
\[A_p(v_1,\dots,v_k):=A(v_1,\dots,v_k);\quad (v_1,\dots,v_k)\in V^k.\]
Equivalently, we may consider $A$ as a $C^\infty(\uhz)$-multilinear mapping \\$(\vm(\uhz))^k\rightarrow C^\infty(\uhz)$, given by
\[A(X_1,\dots,X_k)(p):=A(X_1(p),\dots,X_k(p));\quad X_1,\dots,X_k\in\vm(\uhz),\ p\in \uhz.\]
Keeping these in mind, for any vector field $X$ on $\uhz$ we may define the contracted tensor field $i_XA$ and the Lie derivative $\lie_XA$ in the usual manner. If $A$ is skew-symmetric, we may also speak of the exterior derivative $dA$.
\end{unit}
\begin{lemma}\label{1n}
Let\/ $\uhz$ be an open subset of a vector space $V$, and let $A\in\lin^k(V)$ be a $k$-form, considered as a type $(0,k)$ tensor field on $\uhz$. Then $\lie_ZA=kA$, where $Z\in\vm(\uhz)$ is the radial vector field.
\end{lemma}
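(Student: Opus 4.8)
The plan is to evaluate $\lie_Z A$ on tuples of \emph{constant} vector fields and then invoke tensoriality to conclude. Both $\lie_Z A$ and $kA$ are type $(0,k)$ tensor fields on $\uhz$, that is, $C^\infty(\uhz)$-multilinear maps $(\vm(\uhz))^k\to C^\infty(\uhz)$; since $\vm(\uhz)$ is generated over $C^\infty(\uhz)$ by the constant vector fields (as recalled above), it suffices to verify $\lie_Z A=kA$ when all arguments are constant.

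First I would recall the standard expression for the Lie derivative of a type $(0,k)$ tensor field, forced by the product rule defining $\lie_Z$ together with $\lie_Z f=Zf$ and $\lie_Z X=[Z,X]$:
\[
(\lie_Z A)(X_1,\dots,X_k)=Z\big(A(X_1,\dots,X_k)\big)-\sum_{i=1}^{k}A(X_1,\dots,[Z,X_i],\dots,X_k).
\]
Now take constant vector fields $X_i\equiv v_i$. Then $A(X_1,\dots,X_k)$ is the constant function $p\mapsto A(v_1,\dots,v_k)$, so the first term vanishes. For the summands, recall from the pointwise bracket formula in 6.3 (using $Z'(p)=\mathrm{id}_V$ and $X_i'(p)=0$ for a constant field) that $[X_i,Z]=X_i$, hence $[Z,X_i]=-X_i$. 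Substituting, each summand equals $-A(X_1,\dots,-X_i,\dots,X_k)=A(X_1,\dots,X_k)$, so
\[
(\lie_Z A)(X_1,\dots,X_k)=\sum_{i=1}^{k}A(X_1,\dots,X_k)=k\,A(X_1,\dots,X_k),
\]
and the reduction above yields $\lie_Z A=kA$.

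I do not expect a genuine obstacle here; the only point deserving a moment's care is the reduction to constant vector fields, which rests on the fact that $\lie_Z A$ is again an honest tensor field (because $A$ is one and $\lie_Z$ preserves tensoriality), so that checking the identity on a generating set of $\vm(\uhz)$ suffices. Note that skew-symmetry of $A$ plays no role: the identity $\lie_Z A=kA$ holds for any type $(0,k)$ tensor field on $\uhz$ coming from a fixed element of $\lin^k(V)$. Alternatively, for a $k$-form one could argue via $\lie_Z A=d\,i_Z A+i_Z\,dA$, but the computation above is shorter.
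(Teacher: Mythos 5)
Your proof is correct and is essentially identical to the paper's: reduce to constant vector fields, note that $A(X_1,\dots,X_k)$ is then constant so the $Z$-term vanishes, and use $[Z,X_i]=-X_i$ (the bracket identity from 7.3) to get $k$ copies of $A(X_1,\dots,X_k)$. Your extra remarks on why the reduction to constant fields is legitimate, and on the irrelevance of skew-symmetry, are sound but not needed beyond what the paper already asserts.
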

\begin{proof}
It is enough to check the relation for constant vector fields $X_1,\dots,X_k$ in $\uhz$. Then
\begin{align*}
(\lie_ZA)(X_1,\dots,X_k):&=Z\big(A(X_1,\dots,X_k)\big)-\sum_{i=1}^kA(X_1,\dots,[Z,X_i],\dots,X_k)\\
&=\sum_{i=1}^kA(X_1,\dots,X_k)=kA(X_1,\dots,X_k)
\end{align*}
since the function $A(X_1,\dots,X_k)$ is constant, while $[Z,X_i]=-X_i$ \\$(i\in\{1,\dots,k\})$ as we have seen in 7.3.
\end{proof}
\begin{unit}
By a \emph{pre-Finsler norm} on a vector space $V$ we mean a function \mbox{$f:V\rightarrow\RR$} such that
\begin{enumerate}[(i)]
\item $f$ is of class $C^2$ on $V\nul$;
\item $f$ is $1^+$-homogeneous.
\end{enumerate}
Then $\psi:=\frac{1}{2}f^2$ is the \emph{energy} associated to $f$. A pre-Finsler norm $f:V\rightarrow\RR$ is said to be a \emph{gauge} if
\begin{enumerate}
\item[(iii)] $f(v)>0$ for all $v\in V\nul$;
\item[(iv)] $f$ is subadditive, i.e., $f(v+w)\leq f(v)+f(w)$ \emph{for all} $v,w\in V$.
\end{enumerate}

Notice that the energy $\psi$ is $2^+$-homogeneous and differentiable at $0$, with derivative $0\in V^*:=\lin(V,\RR)$. It follows also immediately that a gauge $f:V\rightarrow\RR$ is a convex function:
\[f\big((1-t)v+tw\big)\leq (1-t)f(v)+tf(w)\]
for all $v,w\in V$ and $t\in[0,1]$. Thus, by condition (i), the second derivatives
\[f''(u):V\times V\longrightarrow\RR,\quad u\in V\nul\]
are positive semidefinite.

A vector space equipped with a pre-Finsler norm or with a gauge will be called a \emph{pre-Finsler} or a \emph{gauge vector space}, respectively.
\end{unit}
\begin{unit}
Let $(V,f)$ be an $n$-dimensional gauge vector space, $n\geq2$. Then
\[B:=\big\{v\in V\big|f(v)\leq1\big\}\textrm{ \emph{and} } S:=\big\{v\in V\big|f(v)=1\big\}\]
are the \emph{f-unit ball} and the \emph{$f$-unit sphere} of $(V,f)$, respectively. The tangent space $T_aS$ of $S$ at a point $a\in S$ may be identified with the $(n-1)$-dimensional subspace $\kr(f'(a))$ of $V$. Notice that $a\notin T_aS$, since
\[f'(a)(a)=f(a)>0\]
by condition (ii) and (iii) in 7.5.

Let an orientation of $V$ be given, and let $\Omega:V^n\rightarrow\RR$ be the unique $n$-form such that $\int_B\Omega=1$. Then the $(n-1)$-form $\omega$ on $S$ given by
\[\omega_a(v_2,\dots,v_n):=\Omega(a,v_2,\dots,v_n),\quad a\in S;\ v_i\in T_aS\subset V,\ i\in\{2,\dots,n\}\]
orients $S$; this is the orientation of $S$ induced by the orientation of $V$ (cf. \cite{GHV} 3.21, Ex. 2). Equivalently, $\omega$ may simply be defined by the formula
\[\omega:=i_Z\Omega\upharpoonright S.\]
\end{unit}

The following observation is a slight generalization of Lemma 2 of Vincze's paper \cite{Vincze}, with essentially the same proof.
\begin{lemma}{\label{bsint}}
Let $(V,f)$ be a gauge vector space of dimension $n\geq2$. If \mbox{$h:V\rightarrow\RR$} is a $0^+$-homogeneous function, of class $C^1$ outside the zero, then
\[\int_B h=\frac 1n\int_Sh.\]
\end{lemma}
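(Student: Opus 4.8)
The identity $\int_B h = \frac1n\int_S h$ for a $0^+$-homogeneous $h$ is a polar-coordinates (Fubini) statement, and the cleanest route is to reduce it to the normalization $\int_B\Omega = 1$, i.e. to the case $h\equiv 1$ (where it says $\int_S\omega = n$), and then bootstrap to general $h$ using homogeneity. First I would set up the diffeomorphism
\[
\Phi:(0,\infty)\times S\longrightarrow V\nul,\qquad \Phi(t,a):=ta,
\]
which is well-defined and smooth because $f$ is $1^+$-homogeneous and of class $C^2$ on $V\nul$, and which is a bijection because every nonzero $v\in V$ has a unique representation $v = f(v)\cdot\bigl(v/f(v)\bigr)$ with $v/f(v)\in S$ (here $f(v)>0$ by condition (iii)). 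The ball $B\nul$ corresponds to $(0,1]\times S$.

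**Key steps.** The heart of the matter is the change-of-variables formula for $\Phi$. I would compute the pull-back $\Phi^*\Omega$. Writing $\frac{d}{dt}$ for the coordinate field on $(0,\infty)$ and using that, for $a\in S$, the tangent space $T_aS$ is identified with $\ker(f'(a))\subset V$ with $a\notin T_aS$, one gets
\[
(\Phi^*\Omega)_{(t,a)}\Bigl(\tfrac{d}{dt},v_2,\dots,v_n\Bigr)
= \Omega\bigl(a, tv_2,\dots,tv_n\bigr)
= t^{\,n-1}\,\Omega(a,v_2,\dots,v_n)
= t^{\,n-1}\,\omega_a(v_2,\dots,v_n),
\]
because $\Phi_*(\tfrac{d}{dt}) = a$ and $\Phi_*(v_i) = t v_i$ for $v_i\in T_aS$, and because $\Omega$ is $n$-linear; the last equality is the definition of $\omega$ in 7.7. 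Hence $\Phi^*\Omega = t^{n-1}\,dt\wedge\omega$ (up to the orientation conventions, which are arranged so that signs work out as in \cite{GHV} 3.21). Now for a $0^+$-homogeneous $h$ one has $h(\Phi(t,a)) = h(ta) = h(a)$, so $\Phi^*(h\,\Omega) = h(a)\,t^{n-1}\,dt\wedge\omega$. Applying the change of variables and Fubini,
\[
\int_B h\,\Omega = \int_0^1\!\!\int_S h(a)\, t^{n-1}\,\omega\;dt
= \Bigl(\int_0^1 t^{n-1}\,dt\Bigr)\int_S h\,\omega
= \frac1n\int_S h\,\omega,
\]
which is exactly the asserted identity (recall $\int_B$ and $\int_S$ refer to integration against $\Omega$ and $\omega$ respectively). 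Taking $h\equiv1$ incidentally recovers $\int_S\omega = n$, but that need not be isolated.

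**Main obstacle.** The routine parts (smoothness of $\Phi$, $C^1$-ness of the integrand, Fubini) are harmless; the delicate point is the bookkeeping of orientations and the claim that $\Phi^*\Omega = t^{n-1}\,dt\wedge\omega$ \emph{with the correct sign}, i.e. that the induced orientation on $S$ from 7.7 is precisely the one making $\Phi$ orientation-preserving from $(0,\infty)\times S$ with its product orientation. This is where I would lean on the cited reference \cite{GHV} 3.21, Ex.~2, whose content is exactly this compatibility; alternatively one checks it on an adapted frame at one point of $S$ and uses connectedness of $S$ (which holds since $n\geq2$ and $B$ is convex, hence $S$ is a sphere). A secondary subtlety is that $h$ is only assumed $C^1$ away from $0$, so all integrals should be read as integrals over $B\nul$ and $S$, which carry full measure; the integrability near $0$ is automatic since $h$ is $0^+$-homogeneous hence bounded near $0$ and $\Omega$ is a smooth volume form. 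Once the orientation normalization is pinned down, the proof is the one-line polar-coordinates computation above.
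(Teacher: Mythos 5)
Your proof is correct, but it follows a genuinely different route from the paper's. You integrate in ``polar coordinates'': you set up the diffeomorphism $\Phi(t,a)=ta$ from $(0,\infty)\times S$ onto $V\nul$, compute $\Phi^*\Omega=t^{n-1}\,dt\wedge\omega$ by evaluating on $\Phi_*(\tfrac{d}{dt})=a$ and $\Phi_*(v_i)=tv_i$, and then use the $0^+$-homogeneity of $h$ to factor the integral as $\bigl(\int_0^1t^{n-1}dt\bigr)\int_Sh\,\omega$. The paper instead avoids any explicit parametrization: it uses $Zh=0$ (Euler's relation for $0^+$-homogeneous functions) together with $\lie_Z\Omega=n\Omega$ (its Lemma \ref{1n}) to write $h\Omega=\frac1n\lie_Z(h\Omega)=\frac1n\,d(i_Zh\Omega)$ via Cartan's magic formula, and then applies Stokes' theorem on $B$ with boundary $S$, where $i_Zh\Omega\upharpoonright S=h\omega$ by the definition of $\omega$. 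The paper's argument is shorter and pushes all orientation bookkeeping into the boundary-orientation convention of Stokes' theorem, but it requires differentiating $h\Omega$ across the origin, where $h$ need not be differentiable --- the authors flag this themselves as ``a slight abuse of notation.'' Your computation is longer and forces you to pin down the sign in $\Phi^*\Omega=t^{n-1}\,dt\wedge\omega$ explicitly (which you correctly identify as the one nontrivial point, resolvable by the same reference \cite{GHV} 3.21 the paper uses to define the induced orientation), but in exchange it excises the origin cleanly and makes the convergence of $\int_Bh$ transparent, since $h$ is bounded on $B\nul$ by its maximum on the compact set $S$. Both proofs are sound; yours is the more elementary and, on the regularity issue at $0$, the more scrupulous of the two.
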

\begin{proof}
By a slight abuse of notation, we are going to use Stokes' formula. Since $Zh=i_Zdh=0$, and, by Lemma \ref{1n}, $\Omega=\frac 1n\lie_Z\Omega$, we obtain
\begin{align*}
\int_Bh:&=\int_Bh\Omega=\frac 1n\int_B\lie_Zh\Omega=\frac 1n\int_B(i_Z\circ d+d\circ i_Z)h\Omega\\
&=\frac 1n\int_Bd(i_Zh\Omega)=\frac 1n\int_Si_Zh\Omega=\frac 1n\int_S h\omega=:\frac 1n\int_Sh.
\end{align*}
\end{proof}
\begin{unit}
We shall now associate a Euclidean structure to a gauge, by the averaged metric construction of Matveev et al. \cite{MRTZ}.
\end{unit}
\begin{lemma}\label{gppos}
Let $(V,f)$ be a gauge vector space with energy function $\psi=\frac 12 f^2$. If for each $v,w\in V$,
\[b(v,w):=\int_S\big(u\longmapsto \psi''(u)(v,w)\big)\omega,\]
then $b$ is a positive definite scalar product on $V$.
\end{lemma}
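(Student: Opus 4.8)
The plan is to check, in turn, that $b$ is well defined, that it is bilinear and symmetric, and then---where essentially all the content lies---that it is positive definite. For fixed $u\in V\nul$ the second derivative $\psi''(u)$ is a \emph{symmetric} bilinear form on $V$ (symmetry because $\psi=\frac12f^2$ is of class $C^2$ on $V\nul$, so mixed second partials commute), hence $(v,w)\mapsto\psi''(u)(v,w)$ is bilinear and symmetric. Since $u\mapsto\psi''(u)$ is continuous on $V\nul$ and $S\subset V\nul$ is compact---for a gauge, $B$ is closed and bounded, hence so is $S$---the map $u\mapsto\psi''(u)(v,w)$ is a continuous function on the compact oriented manifold $S$, so $b(v,w)$ exists and is finite; bilinearity and symmetry of $b$ then follow from those of the integrand together with the linearity of $h\mapsto\int_Sh\,\omega$. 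One cannot simply trade $\int_S$ for $\int_B$ via Lemma \ref{bsint}: although $u\mapsto\psi''(u)(v,w)$ is $0^+$-homogeneous, it need not be of class $C^1$, so the computation below is carried out on $S$.

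For positive semidefiniteness I would first record the pointwise identity obtained by differentiating $\psi=\frac12f^2$ twice, namely $\psi'(u)(v)=f(u)f'(u)(v)$ and hence
\[\psi''(u)(v,v)=\big(f'(u)(v)\big)^2+f(u)\,f''(u)(v,v),\qquad u\in V\nul,\ v\in V.\]
On $S$ we have $f(u)=1>0$, and by the convexity of the gauge $f$ (observed in 7.5) the forms $f''(u)$ are positive semidefinite; therefore $\psi''(u)(v,v)\ge0$ for every $u\in S$. As $\omega$ is a volume form inducing the chosen orientation of $S$ (7.7), integrating the nonnegative continuous function $u\mapsto\psi''(u)(v,v)$ gives $b(v,v)\ge0$.

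The crux is then to show $b(v,v)=0\Rightarrow v=0$. If $b(v,v)=0$, then the nonnegative continuous integrand has vanishing integral over $S$ against the volume form $\omega$, so it vanishes identically: $\psi''(u)(v,v)=0$ for all $u\in S$. In the displayed identity both summands are then nonnegative and sum to zero, so in particular $f'(u)(v)=0$ for every $u\in S$. Suppose $v\ne0$; then $f(v)>0$, so $u_0:=f(v)^{-1}v\in S$, and using linearity of $f'(u_0)$ together with Euler's relation (7.2) at $u_0$ and $f(u_0)=1$,
\[0=f'(u_0)(v)=f(v)\,f'(u_0)(u_0)=f(v)\,f(u_0)=f(v)>0,\]
a contradiction. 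Hence $v=0$, and $b$ is a positive definite scalar product.

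The only step requiring genuine care is the passage, inside the proof of definiteness, from the vanishing of the integral $b(v,v)$ to the pointwise vanishing of $\psi''(\cdot)(v,v)$ on $S$: this uses that $\omega$ is a bona fide (positively oriented) volume form and that the integrand is continuous---the latter being exactly why $f$ was required to be of class $C^2$ away from the origin. Everything else---the homogeneity bookkeeping, Euler's identity, and the semidefiniteness of $f''$---is elementary or already recorded in Section 7.
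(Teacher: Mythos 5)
Your proof is correct and follows essentially the same route as the paper: nonnegativity of the integrand from convexity of the gauge, plus strict positivity at the specific point $a=v/f(v)\in S$ via Euler's relation, with continuity closing the argument. The only (cosmetic) differences are that you expand $\psi''(u)(v,v)=\big(f'(u)(v)\big)^2+f(u)f''(u)(v,v)$ and run the definiteness step by contradiction, whereas the paper computes $\psi''(a)(v,v)=f(v)^2>0$ directly from the homogeneity of $\psi''$, $\psi'$, $\psi$.
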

\begin{proof}
Bilinearity and symmetry of $b$ are obvious, since for all $u\in V\nul$, the second derivative $\psi''(u):V\times V\rightarrow\RR$ has these properties. In addition, $\psi''(u)$ is positive semidefinite, since $\psi$ is also a convex function. If $v\in V\nul$ and $a:=\frac1{f(v)}v$, then $a\in S$ and
\begin{align*}
\psi''(a)(v,v)&=\psi''\left(\frac 1{f(v)}v\right)(v,v)=\psi''(v)(v,v)\\
&=\psi'(v)(v)=2\psi(v)=f(v)^2>0,
\end{align*}
taking into account that $\psi''$, $\psi'$ and $\psi$ are $0^+$-, $1^+$- and $2^+$-homogeneous, respectively, and applying repeatedly Euler's relation (7.2). Thus the function
\[u\in S\longmapsto \psi''(u)(v,v)\in\RR\]
with fixed $v\in V\nul$, is positive at the point $a\in S$, therefore, by continuity, it is positive also in a neighbourhood of $a$.

This proves the positive definiteness of $b$.
\end{proof}
\begin{unit}
We recall that a \emph{rough section} in a vector bundle $\pi:N\rightarrow M$ is any mapping $s:M\rightarrow N$ such that $\pi\circ s=1_M$. In what follows, we shall use the term `rough tensor field' in this sense.

Now suppose that $D$ is a covariant derivative operator on the manifold $M$. We say that a rough tensor field $A$ of type $(0,k)$ on $M$ is \emph{invariant by $D$-parallel translation}, if for any two points $p,q$ in $M$ and curve segment $\gamma$ from $p$ to $q$, for the parallel translation $P_\gamma:T_pM\rightarrow T_qM$ along $\gamma$ we have $P_\gamma^*A_q=A_p$, where
\[(P_\gamma^*A_q)(v_1,\dots,v_n):=A_q\big(P_\gamma(v_1),\dots,P_\gamma(v_n)\big);\ v_i\in T_pM,\ i\in \{1,\dots,n\}.\]
\end{unit}
\begin{lemma}\label{smooth}
Let $D$ be a covariant derivative on $M$. If a rough covariant tensor field on $M$ is invariant by $D$-parallel translation, then it is actually smooth.
\end{lemma}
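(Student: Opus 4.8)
The plan is to prove smoothness locally and reduce everything to the smooth dependence of $D$-parallel translation on the endpoint of the transporting curve. Fix an arbitrary point $p_0\in M$; since smoothness is a local property, it suffices to show that the given rough covariant tensor field $A$ (of type $(0,k)$) is smooth on a neighbourhood of $p_0$. First I would choose a chart $(\mathcal U,(u^i)_{i=1}^n)$ centred at $p_0$, with $u^i(p_0)=0$, whose coordinate image is an open ball, hence star-shaped with respect to the origin. For $q\in\mathcal U$ with coordinates $x^i:=u^i(q)$, let $\gamma_q\colon[0,1]\to\mathcal U$ be the radial curve $t\mapsto u^{-1}(tx^1,\dots,tx^n)$; by star-shapedness it stays in $\mathcal U$, and it joins $p_0$ to $q$. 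Write $P_q:=(P_{\gamma_q})_0^1\colon T_{p_0}M\to T_qM$ for the associated $D$-parallel translation, which is a linear isomorphism because $D$ is a (linear) covariant derivative.

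Next I would build a smooth local frame adapted to these curves. Fix the coordinate basis $e_a:=\big(\tfrac{\partial}{\partial u^a}\big)_{p_0}$ of $T_{p_0}M$, and set $E_a(q):=P_q(e_a)$. In the chart, $E_a(q)$ is the value at $t=1$ of the solution of the linear system $X^i{}'+(\Gamma^i_{jk}\circ\gamma_q)\,\gamma_q^{\,j}{}'\,X^k=0$ with $X(0)=e_a$, i.e.\ the parallel-transport equations (P) of Section~3 specialised to a linear connection. The coefficients of this system depend smoothly on $(t,x^1,\dots,x^n)$, so by the classical theorem on the smooth dependence of solutions of an ODE on parameters and initial data, $q\mapsto E_a(q)$ is smooth on $\mathcal U$. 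Since each $P_q$ is a linear isomorphism, $(E_a)_{a=1}^n$ is a smooth frame over $\mathcal U$, with a (smooth) dual coframe $(E^a)_{a=1}^n$.

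Finally I would express $A$ in this frame and read off smoothness. Its components $A_{a_1\dots a_k}(q):=A_q\big(E_{a_1}(q),\dots,E_{a_k}(q)\big)$ satisfy, by the invariance of $A$ under $D$-parallel translation applied to $\gamma_q$ together with $E_a(q)=P_q(e_a)$,
\[A_{a_1\dots a_k}(q)=\big(P_q^{*}A_q\big)(e_{a_1},\dots,e_{a_k})=A_{p_0}(e_{a_1},\dots,e_{a_k}),\]
which is independent of $q$. Hence on $\mathcal U$ we have $A=A_{p_0}(e_{a_1},\dots,e_{a_k})\,E^{a_1}\otimes\cdots\otimes E^{a_k}$, a constant-coefficient combination of the smooth coframe fields $E^a$; thus $A\upharpoonright\mathcal U$ is smooth, and since $p_0$ was arbitrary, $A$ is smooth on $M$.

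The main obstacle is precisely the smoothness of the adapted frame $(E_a)$, i.e.\ of the map $q\mapsto P_q$: everything hinges on the smooth dependence of the solution of the parallel-transport ODE on the parameter $q$, which enters through the Christoffel symbols evaluated along the explicitly chosen radial curves $\gamma_q$. This is a standard fact once the transporting curves are arranged to vary smoothly with $q$ and to remain inside a single chart (which is why a star-shaped coordinate ball is used); after that the argument is purely formal, since invariance under parallel translation forces the frame components of $A$ to be constant, and a constant-coefficient combination of a smooth coframe is smooth.
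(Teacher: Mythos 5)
Your proof is correct and follows essentially the same route as the paper: build a smooth frame by parallel translation along a family of curves emanating from the base point (the smoothness coming from smooth dependence of the linear transport ODE on parameters), then observe that invariance forces the frame components of $A$ to be constant. The only difference is that you transport along radial coordinate curves in a star-shaped chart where the paper uses geodesics in a normal neighbourhood --- a harmless, if anything slightly more elementary, variant.
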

\begin{proof}
The property in question is local, so it is enough to show the desired smoothness in a neighbourhood of an arbitrarily chosen point of $M$. So let $p\in M$, and let $\uhz$ be a normal neighbourhood of $p$. If $(e_i)_{i=1}^n$ is a basis of $T_pM$, then it can be extended to a frame $(E_i)_{i=1}^n$ for $TM$ over $\uhz$ by parallel translations along geodesics starting from $p$. Differential equation theory (smoothness of ODE solutions) guarantees that the vector fields $E_i$ are smooth. Now, if a rough covariant tensor field $A$ on $M$ is invariant by $D$-parallel translation, then the components of $A$ with respect to $(E_i)_{i=1}^n$ are constant; hence $A$ is smooth over $\uhz$.
\end{proof}
\begin{unit}
We say that a function $F:TM\rightarrow\RR$ is a \emph{pre-Finsler function}, resp. a \emph{gauge function} for $M$, if it is of class $C^2$ on $\splt M$ and $F_p:=F\upharpoonright T_pM$ is a pre-Finsler norm, resp. a gauge for each $p\in M$. $E:=\frac12F^2$ is the \emph{energy function} associated to the pre-Finsler function (or gauge function) $F$. A manifold equipped with a pre-Finsler function (resp. a gauge function) is said to be a \emph{pre-Finsler manifold} (resp. a \emph{gauge manifold}). A gauge manifold $(M,F)$ becomes a Finsler manifold, if $F$ is smooth on $\splt M$, and for each $p\in M$, $u\in T_pM\nul$
\[(F_p)''(u)(v,v)=0\textrm{ \emph{implies} }v\in \mathrm{span}(u);\]
see \cite{Ker}, Prop 4.5.
\end{unit}
\begin{unit}
Now we are in a position to introduce the main actor of this chapter, and to formulate and prove Matveev's generalization of Szab\'o's theorem on Riemann metrizability of the base covariant derivative of a Berwald manifold.

We say that a triplet $(M,F,D)$ is a \emph{Berwald\,--\,Matveev manifold} if $F$ is a gauge function for $M$ and $D$ is a torsion-free covariant derivative on $M$ which is \emph{compatible with $F$} in the sense that the parallel translations with respect to $D$ preserve the $F$-norms of tangent vectors to $M$. The next Proposition assures that this compatibility condition determines the covariant derivative $D$ uniquely.
\end{unit}
\begin{prop}\label{gm}
\emph{Let $(M,F,D)$ be a Berwald\,--\,Matveev manifold of dimension $n$, $n\geq2$. For every point $p\in M$, let
\renewcommand*\labelitemi{}
\begin{itemize}
\item $F_p:=F\upharpoonright T_pM$; $E_p:=E\upharpoonright T_pM$, $E=\frac12F^2$;
\item $B_p\subset T_pM$ the unit $F_p$-ball;
\item $S_p\subset T_pM$ the unit $F_p$-sphere;
\item $Z_p:T_pM\rightarrow T_pM$, $v\mapsto Z_p(v):=v$ the radial vector field on $T_pM$;
\item $\Omega_p$ the unique volume form on $T_pM$ such that $\int_{B_p}\Omega=1$;
\item $\omega_p:=i_{Z_p}\Omega_p\upharpoonright S_p$ the induced volume form on $S_p$.
\end{itemize}}

\emph{Define a type $(0,2)$ rough tensor field $g_M$ on $M$ by prescribing its value $(g_M)_p$ at a point $p\in M$ according to Lemma \ref{gppos}, that is, by the rule
\[(g_M)_p(v,w):=\int_{S_p}\big(u\longmapsto (E_p)''(u)(v,w)\big)\omega_p\ ;\quad v,w\in T_pM.\]
Then $g_M$ is a (positive definite) Riemannian metric on $M$ whose Levi-Civita derivative is $D$.}
\end{prop}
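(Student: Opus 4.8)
The plan is to check three things in turn: that $g_M$ is fibrewise positive definite, that it is smooth, and that $D$ is its Levi-Civita derivative. The first is immediate. For each $p\in M$ the pair $(T_pM,F_p)$ is a gauge vector space of dimension $n\geq2$ with associated energy $E_p$, and $(g_M)_p$ is precisely the bilinear form produced from this data by Lemma~\ref{gppos}; hence $(g_M)_p$ is a positive definite scalar product on $T_pM$.

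For smoothness I would exploit the compatibility of $D$ with $F$. Let $\gamma$ be a curve segment from $p$ to $q$, with parallel translation $P_\gamma\colon T_pM\to T_qM$. Since $D$-parallel translation preserves $F$-norms, $F_q\circ P_\gamma=F_p$, hence $E_q\circ P_\gamma=E_p$, and $P_\gamma$ restricts to linear diffeomorphisms $B_p\to B_q$ and $S_p\to S_q$. As $P_\gamma$ is linear, differentiating $E_q\circ P_\gamma=E_p$ twice gives $(E_q)''(P_\gamma u)(P_\gamma v,P_\gamma w)=(E_p)''(u)(v,w)$ for all $u,v,w$. Choosing a local orientation near $p$ (which orients the nearby tangent spaces and, by continuity from the identity, makes $P_\gamma$ orientation-preserving for short $\gamma$), the normalisations $\int_{B_p}\Omega_p=1=\int_{B_q}\Omega_q$ together with $P_\gamma(B_p)=B_q$ force $P_\gamma^{*}\Omega_q=\Omega_p$, and therefore $P_\gamma^{*}\omega_q=\omega_p$ on $S_p$. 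Changing variables $u=P_\gamma\bar u$ in the integral defining $(g_M)_q(P_\gamma v,P_\gamma w)$ then yields $(g_M)_q(P_\gamma v,P_\gamma w)=(g_M)_p(v,w)$; that is, $g_M$ is invariant by $D$-parallel translation. By Lemma~\ref{smooth}, $g_M$ is smooth, and together with the first step it is a Riemannian metric on $M$.

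Finally, invariance of $g_M$ under $D$-parallel translation says exactly that $Dg_M=0$, so $D$ is a metric derivative for $g_M$. Since $D$ is torsion-free by the definition of a Berwald\,--\,Matveev manifold, the uniqueness part of the fundamental theorem of Riemannian geometry identifies $D$ with the Levi-Civita derivative of $g_M$, completing the proof.

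The only point I expect to require genuine care is the orientation bookkeeping in the second step, i.e.\ establishing $P_\gamma^{*}\Omega_q=\Omega_p$ rather than $-\Omega_p$. This is handled by the local orientation choice above and the fact that parallel translation along short curves is homotopic through linear isomorphisms to the identity; alternatively one can bypass the issue by noting that the integral $\int_{S_p}(\,\cdot\,)\,\omega_p$ is unchanged under a simultaneous reversal of the orientations of $T_pM$ and of $S_p=\partial B_p$, so that $g_M$ is well defined independently of any orientation, and the parallel-translation identity can then be verified in a consistently oriented neighbourhood.
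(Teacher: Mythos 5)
Your proposal is correct, and its overall architecture coincides with the paper's: fibrewise positive definiteness from Lemma~\ref{gppos}, invariance of $g_M$ under $D$-parallel translation, smoothness via Lemma~\ref{smooth}, and finally the uniqueness of the torsion-free metric connection. Where you genuinely diverge is in the one delicate step, the proof of $P_\gamma^*(g_M)_q=(g_M)_p$. You fix a local orientation, argue by continuity from $(P_\gamma)_0^0=\mathrm{id}$ that parallel translation along short curves is orientation-preserving, conclude $P_\gamma^*\Omega_q=\Omega_p$ with no sign ambiguity, hence $P_\gamma^*\omega_q=\omega_p$ (using that the radial fields are $P_\gamma$-related, a point worth stating explicitly), and then change variables directly in the sphere integrals, combined with the identity $(E_q)''(P_\gamma u)(P_\gamma v,P_\gamma w)=(E_p)''(u)(v,w)$ obtained by differentiating $E_q\circ P_\gamma=E_p$ twice. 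The paper instead accepts the ambiguity $P_\gamma^*\Omega_q=\pm\Omega_p$ (its Claim~1) and neutralizes it by passing from sphere integrals to ball integrals via Lemma~\ref{bsint} ($\int_Bh=\frac1n\int_Sh$): the $\pm$ from Claim~1 and the $\pm$ from the unoriented change-of-variables formula occur together and cancel. Your route is more direct and dispenses with Lemma~\ref{bsint} in this proof, at the cost of orientation bookkeeping, which you handle correctly; your remark that $g_M$ is unchanged under simultaneous reversal of the orientations of $T_pM$ and $S_p$ is the cleanest way to see that the construction is orientation-independent. The restriction to short curves is harmless, since both the application of Lemma~\ref{smooth} (which only uses translation along radial geodesics in a normal neighbourhood) and the conclusion $Dg_M=0$ are local.
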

\begin{proof}
By Lemma \ref{gppos}, $(g_M)_p$ is a positive definite scalar product on $T_pM$ for each $p\in M$. So it is enough to check that $g_M$ is invariant by $D$-parallel translation: then Lemma \ref{smooth} implies that $g_M$ is a Riemannian metric on $M$, and it follows at once that the Levi-Civita derivative of $(M,g_M)$ is just $D$.
Let $p,q\in M$, and let $\gamma:[0,1]\rightarrow M$ be a curve segment connecting $p$ with $q$, i.e. $\gamma(0)=p$, $\gamma(1)=q$. Consider the parallel translation $P_\gamma:T_pM\rightarrow T_qM$ along $\gamma$.
\begin{claim}
$P_\gamma^*\Omega_q=\pm\Omega_p$.
\end{claim}

To see this, let us first note that $P_\gamma$ preserves the $F$-norms of the tangent vectors to $M$ by our compatibility condition, so it is a diffeomorphism of $B_p$ onto $B_q$. Thus, applying the `change of variables' formula for the integral of differential forms, we obtain
\[\int_{B_p}P_\gamma^*\Omega_q=\pm\int_{B_q}\Omega_q=\pm 1.\]
This implies (by the uniqueness of $\Omega_p$) the desired relation.

\begin{claim}
For fixed $v,w\in T_pM$, let
\[\varphi(z):=(E_q)''(z)(P_\gamma(v),P_\gamma(w)),\quad z\in T_q M\setminus\{0\}.\]
Then $\varphi$ is $0^+$-homogeneous and
\[\varphi\circ P_\gamma(z)=(E_p)''(z)(v,w),\quad z\in T_p M\setminus\{0\}.\]
\end{claim}

The first assertion is obvious, since $(E_q)''$ is $0^+$-homogeneous. The formula for $\varphi\circ P_\gamma$ may be checked immediately, using the definition of the second derivative of $E_q$ at $P_\gamma(z)$ and taking into account that $P_\gamma$ is a linear mapping.
\begin{claim}
$P_\gamma^*(g_M)_q=(g_M)_p$.
\end{claim}
Let $v,w\in T_pM$. Then
\begin{align*}
(P_\gamma^*(g_M)_q)(v,w)&=(g_M)_q(P_\gamma(v),P_\gamma(w))\\
&:=\int_{S_q}\big(z\longmapsto(E_q)''(z)(P_\gamma(v),P_\gamma(w))\big)\omega_q\\
&\overset{\textrm{Lemma \ref{bsint}}}=\frac1n\int_{B_q}\big(z\longmapsto(E_q)''(z)(P_\gamma(v),P_\gamma(w))\big)\Omega_q=\\
&=\frac1n\int_{B_q}\varphi\,\Omega_q\\
&\overset{\textrm{Claim 1}}=\pm\frac1n\int_{B_q}\varphi(P_\gamma^{-1})^*\Omega_p\\
&\overset{\textrm{change of variables}}=\frac1n\int_{B_p}(\varphi\circ P_\gamma)\Omega_p\\
&\overset{\textrm{Lemma \ref{bsint}}}=\int_{S_p}(\varphi\circ P_\gamma)\omega_p\\
&\overset{\textrm{Claim 2}}=\int_{S_p}\big(u\longmapsto (E_p)''(u)(v,w)\big)\omega_p\\
&=(g_M)_p(v,w).
\end{align*}

This concludes the proof.
\end{proof}

\begin{unit}
Let $(M,F)$ be a \emph{pre-Finsler manifold}; $[a,b]\subset\RR$, where $a\neq b$, a compact interval, and choose two points, $p$ and $q$, in $M$. Denote by $\mathcal C(p,q)$ the set of all $C^1$ curve segments $\gamma:[a,b]\to M$ such that $\gamma(a)=p$, $\gamma(b)=q$. Define the \emph{energy functional}
\[\mathbb E:\mathcal C(p,q)\to\mathbb R\]
by
\[\mathbb E(\gamma):=\int_a^bE\circ\dot\gamma=\int_a^bE(\dot\gamma(t))dt,
\quad\gamma\in\mathcal C(p,q).\]
The regular extremals of $\mathbb E$ (i.e., the critical `points' $\gamma\in\mathcal C(p,q)$ with $\dot\gamma(t)\neq 0$ for all $t\in[a,b]$) are said to be the \emph{geodesics} corresponding to (or of) $F$. In terms of coordinates, a regular curve $\gamma$ in $\mathcal C(p,q)$ is a geodesic of $F$ if, and only if, in any induced chart $\big(\tau^{-1}(\mathcal U);(x^i,y^i)_{i=1}^n\big)$ such that $\mathrm{Im}(\gamma)\cap\mathcal U\neq\emptyset$, the \emph{Euler\,--\,Lagrange equations}
\[\frac{\partial E}{\partial x^i}\circ\dot\gamma
-\left(\frac{\partial E}{\partial y^i}\circ\dot\gamma\right)'=0,
\quad i\in\{1,\dots,n\}\]
are satisfied. If, in particular, $F$ is a Finsler function, then the concept of an $F$-geodesic just introduced yields the same curves as our earlier definition in 4.2(a).
\end{unit}
\begin{remark}
One may also consider the arclength-functional
\[
\mathbb F:\mathcal C(p,q)\to\mathbb R,\quad\gamma\mapsto
\mathbb F(\gamma):=\int_a^bF\circ\dot\gamma=\int_a^bF(\dot\gamma(t))dt.
\]
It is not difficult to show (see, e.g., \cite{GH}, p.\ 185) that the set of the geodesics corresponding to $F$ coincides with the set of positive constant speed extremals of $\mathbb F$.
\end{remark}

\begin{prop}
\emph{Let $(M,F,D)$ be an at least two-dimensional Berwald\,--\,Matveev manifold with associated Riemannian metric $g_M$ defined by Proposition \ref{gm}. Then any geodesic of the Riemannian manifold $(M,g_M)$ is also a geodesic of $F$.}
\end{prop}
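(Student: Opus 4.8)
The plan is to show that a $g_M$-geodesic satisfies the Euler--Lagrange equations of the energy functional $\mathbb E$ attached to $F$, i.e. that it is an $F$-geodesic in the sense of 7.15. The natural route is to compare the two sprays: on the one hand the canonical spray attached to the Riemannian metric $g_M$, whose integral curves (projected to $M$) are the $g_M$-geodesics; on the other hand the ``Finsler'' object governing the $F$-geodesics. First I would recall that, since $D$ is torsion-free and compatible with $F$ (the Berwald--Matveev hypothesis), the Ehresmann connection $\mathcal H_D$ induced by $D$ is torsion-free, homogeneous, and compatible with $F$ in the sense of 3.5; moreover, by Proposition \ref{gm}, $D$ is exactly the Levi-Civita derivative of $g_M$, so $\mathcal H_D$ coincides with the canonical (Levi-Civita) connection of the Riemannian manifold $(M,g_M)$. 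Hence the $g_M$-geodesics are precisely the geodesics of $\mathcal H_D$ in the sense of section 3, i.e. the curves with $\ddot\gamma = \mathcal H_D(\dot\gamma,\dot\gamma)$.

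The core of the argument is then to show that every such $\mathcal H_D$-geodesic is a critical point of $\mathbb E$ among $C^1$ curve segments with fixed endpoints. The cleanest way I see is to work with the energy $E=\tfrac12 F^2$ and observe that, because $\mathcal H_D$-parallel translation preserves $F$-norms (compatibility) and $\dot\gamma$ is $\mathcal H_D$-parallel along an $\mathcal H_D$-geodesic, the quantity $E\circ\dot\gamma$ is constant along such a geodesic; this is the analogue of constant-speed parametrization and is exactly the hypothesis under which the Remark after 7.15 lets one pass between the arclength functional $\mathbb F$ and the energy functional $\mathbb E$. So it suffices to show that $\mathcal H_D$-geodesics are extremals of $\mathbb F$. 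For this I would invoke the standard Finslerian first-variation computation: the first variation of $\mathbb F$ along a variation vanishing at the endpoints involves the Euler--Lagrange operator of $F$, whose vanishing is equivalent to the canonical (Finsler) spray equation $\ddot\gamma = S\circ\dot\gamma$. The point is to identify this Finsler spray with $S_D := (v\mapsto \mathcal H_D(v,v))$; but this identification is exactly what was already established in the proof of the Proposition characterizing Berwald manifolds — more precisely in the step (B9)$\Rightarrow$(B6) — once one knows $\mathcal H_D$ is torsion-free, homogeneous and compatible with $F$, which is the content of Facts 2 and 3 (uniqueness of the canonical connection), suitably adapted to the gauge setting. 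Alternatively, and perhaps more transparently, I would argue directly in coordinates: the Euler--Lagrange equations for $\mathbb E$ are $\frac{\partial E}{\partial x^i}\circ\dot\gamma - \big(\frac{\partial E}{\partial y^i}\circ\dot\gamma\big)' = 0$; differentiating the compatibility relation $X^{\mathrm h_D} F = 0$ (equivalently $X^{\mathrm h_D} E = 0$) for all $X$ shows that $E$ is constant on $\mathcal H_D$-horizontal curves, and combining this with homogeneity of $E$ of degree $2$ and the explicit form of the $\mathcal H_D$-geodesic equations $\gamma^i{}'' + (\Gamma^i_{jk}\circ\gamma)\gamma^j{}'\gamma^k{}' = 0$ one verifies the Euler--Lagrange equations by a direct substitution.

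The main obstacle I anticipate is \emph{not} the formal manipulation but making sure every ingredient used in the classical (quadratically convex) Finsler setting survives under the weaker gauge hypothesis. Two delicate points deserve care: first, the metric tensor of $F$ need no longer be nondegenerate, so one cannot freely invert $g_{ij}$ or appeal to the existence of a unique canonical spray via Fact 1 — instead one must phrase everything in terms of the given $D$ (and the genuinely nondegenerate $g_M$), never in terms of a ``Finsler metric tensor'' of $F$. Second, the regularity of $F$ is only $C^2$ on $\mathring TM$, so the first-variation computation must be carried out at the level of $C^2$ Lagrangians, which is fine for deriving Euler--Lagrange equations but forbids higher differentiations; the argument should therefore be organized so that at most two derivatives of $E$ are ever taken. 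Provided one is disciplined about these two points, the proof reduces to: (i) $\mathcal H_D$ is the Levi-Civita connection of $g_M$ (Proposition \ref{gm}); (ii) $g_M$-geodesics $=$ $\mathcal H_D$-geodesics; (iii) $E\circ\dot\gamma$ is constant along them (compatibility); (iv) an $\mathcal H_D$-geodesic satisfies the Euler--Lagrange equations of $E$, hence is an $F$-geodesic. Step (iv) is the one where the real content lies, and I would present it via the direct coordinate verification described above, since that avoids any appeal to an ill-defined Finsler spray.
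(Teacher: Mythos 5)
Your proposal is correct, and it reaches the conclusion by a genuinely different route from the paper. The paper uses a regularization trick due to Matveev: it sets $\widetilde E:=E_R+E$, where $E_R$ is the energy of $g_M$, notes that $(\widetilde E_p)''(u)=(g_M)_p+(E_p)''(u)$ is positive definite (positive definite plus positive semidefinite), so that $\widetilde F:=\sqrt{2\widetilde E}$ is an honest $C^2$ Finsler function whose $D$-parallel translations preserve $\widetilde F$-norms; by (B8), $(M,\widetilde F)$ is then a Berwald manifold whose geodesics are precisely the $D$-geodesics, i.e.\ the $g_M$-geodesics. Since the Euler--Lagrange operator of $\widetilde{\mathbb E}=\mathbb E_R+\mathbb E$ splits as the sum of those of $\mathbb E_R$ and $\mathbb E$, and a $g_M$-geodesic annihilates both the sum (being a $\widetilde F$-geodesic) and the $E_R$-part (being an $F_R$-extremal), it annihilates the $E$-part, which is the assertion. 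You instead verify the Euler--Lagrange equations of $\mathbb E$ directly: your instinct to discard the route through ``the canonical spray of $F$'' is sound, since that object need not exist for a mere gauge, and your preferred coordinate computation does go through --- differentiating the compatibility relation $\frac{\partial E}{\partial x^k}=(\Gamma^i_{kl}\circ\tau)\,y^l\,\frac{\partial E}{\partial y^i}$ with respect to $y^j$, evaluating along $\dot\gamma$, contracting with $\gamma^k{}'$, and using the $\mathcal H_D$-geodesic equation together with the symmetry of $\Gamma^i_{jk}$ (torsion-freeness) and of the second partials of $E$ ($C^2$-regularity on $\mathring TM$) yields exactly $\frac{\partial E}{\partial x^j}\circ\dot\gamma-\bigl(\frac{\partial E}{\partial y^j}\circ\dot\gamma\bigr)'=0$. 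Both arguments are valid; the paper's buys a clean reuse of the already-proved Berwald machinery and showcases a regularization idea that is useful elsewhere in the gauge setting, while yours is more elementary and self-contained, never takes more than two derivatives of $E$, and makes explicit exactly which hypotheses carry the proof. (Your intermediate step (iii), the constancy of $E\circ\dot\gamma$, is actually not needed once you commit to verifying the Euler--Lagrange equations of $\mathbb E$ rather than passing through $\mathbb F$.)
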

\begin{proof}
Let $E_R$ be the energy function associated to the Riemannian metric $g_M$, given by
\[E_R(v):=\frac12(g_M)_{\tau(v)}(v,v),\quad v\in TM.\]
We define the function
\[
\widetilde E:=E_R+E,
\]
and let $\widetilde F:=\sqrt{2\widetilde E}$. Then $\widetilde F$ is of class $C^2$ on $\mathring TM$ and satisfies (F2). Since at each point $p\in M$ and for any tangent vectors $u\in\mathring T_pM$, $v,w\in T_pM$ we have
\[
\big(\widetilde E_p\big)''(u)(v,w):=(g_M)_{\tau(u)}(v,w)+(E_p)''(u)(v,w),
\]
and on the right-hand side of this relation $(E_p)''(u)$ is positive semidefinite, while $(g_M)_{\tau(u)}$ is positive definite, it follows that
\[\big(\widetilde E_p\big)''(u)
=\big(\widetilde E\upharpoonright T_pM\big)''(u)\]
is positive definite. This implies that $\widetilde F$ satisfies condition (F4) in 4.1, therefore $\big(M,\widetilde F\big)$ is a Finsler manifold of class $C^2$. Since the parallel translations with respect to $D$ preserve the $E$-norms and, by Proposition \ref{gm}, also the $E_R$-norms of the tangent vectors to $M$, it follows that they also preserve the $\widetilde E$-norms, and hence the $\widetilde F$-norms. So, by Proposition 6, $\big(M,\widetilde F\big)$ is a Berwald manifold with Finsler function of class $C^2$, and the set of geodesics of $\widetilde F$ coincides with the set of geodesics corresponding to $F_R:=\sqrt{E_R}$.

In an induced chart $\big(\tau^{-1}(\mathcal U),(x^i,y^i)_{i=1}^n\big)$, the Euler\,--\,Lagrange equations of the energy functional
\[
\widetilde{\mathbb E}:\mathcal C(p,q)\to\mathbb R,\quad\gamma\mapsto
\widetilde{\mathbb E}(\gamma):=\int_a^b\widetilde E\circ\dot\gamma
=\int_a^bE_R\circ\dot\gamma+\int_a^bE\circ\dot\gamma
\]
take the form
\begin{align*}
0&=\frac{\partial E}{\partial x^i}\circ\dot\gamma
-\left(\frac{\partial E}{\partial y^i}\circ\dot\gamma\right)'
+\left(\frac{\partial E_R}{\partial x^i}\circ\dot\gamma
-\left(\frac{\partial E_R}{\partial y^i}\circ\dot\gamma\right)'\right)\\
&=\frac{\partial E}{\partial x^i}\circ\dot\gamma
-\left(\frac{\partial E}{\partial y^i}\circ\dot\gamma\right)',
\end{align*}
since, as we have just seen, if $\gamma$ is a geodesic of $\widetilde F$, then it is also a geodesic of $F_R$ -- and vice versa. Thus it follows that the geodesics of $F_R$, i.e., of the Riemannian manifold $(M,g_M)$, are also geodesics of the gauge function $F$.
\end{proof}

\begin{unit}
We present a further natural and simple construction to show that if $(M,F,D)$ is a Berwald\,--\,Matveev manifold, then $D$ is the Levi-Civita derivative of a Riemannian metric on $M$.

In what follows, by an \emph{ellipsoid} on a vector space $V$ we mean the unit ball of a positive definite scalar product $b:V\times V\rightarrow \RR$, i.e., a set of the form $\E(b):=\{v\in V|b(v,v)\leq 1\}$. Ellipsoids are preserved by linear isomorphisms, namely, if $\Phi:V\rightarrow W$ is an isomorphism, then
\[\Phi\big(\E(b)\big)=\E\big((\Phi^{-1})^*b\big).\]
By the classical Loewner\,--\,Behrend theorem (see e.g.\ \cite{Berg}), if $V$ is endowed with a Lebesgue measure and $K$ is a compact subset of $V$ with non-empty interior, then there exists a unique least-volume ellipsoid containing $K$. We call this ellipsoid the \emph{Loewner ellipsoid} determined by $K$.
\end{unit}

Now, keeping the notation of Proposition \ref{gm}, consider a Berwald\,--\,Matveev manifold $(M,F,D)$. At each point $p\in M$, denote by $\E(b_p)$ the Loewner ellipsoid determined by the unit $F_p$-ball $B_p$. Then
\[g_L:p\in M\longmapsto (g_L)_p:=b_p\in\lin^2(T_pM,\RR)\]
is a rough Riemannian metric on $M$; we show that it is actually a Riemannian metric.

As in the proof of Proposition \ref{gm}, let $p$ and $q$ be two points in $M$, and let $\gamma$ be a curve segment connecting $p$ with $q$. Then, by our previous remark,
\[P_\gamma\big(\E(b_p)\big)=\E\big((P_\gamma^{-1})^*b_p\big)\]
is an ellipsoid in $T_qM$. Since $P_\gamma$ preserves the $F$-norm of tangent vectors to $M$, we have $B_q\subset P_\gamma\big(\E(b_p)\big)$. If $\E_q$ is another ellipsoid containing $B_q$, then $P_\gamma^{-1}(\E_q)$ is an ellipsoid in $T_pM$ containing $B_p$, and
\begin{align*}
\int_{\E_q}\Omega_q&\overset{\textrm{Claim 1, 7.10}}=\pm\int_{\E_q}(P_\gamma^{-1})^*\Omega_p\\
&\overset{\textrm{change of variables}}=\int_{P_\gamma^{-1}(\E_q)}\Omega_p\geq \int_{\E(b_p)}\Omega_p\\
&\overset{\textrm{Claim 1, 7.10}}=\pm\int_{\E(b_p)}P_\gamma^*\Omega_q\overset{\textrm{change of variables}}=\int_{P_\gamma(\E(b_p))}\Omega_q\\
&=\int_{\E((P_\gamma^{-1})^*b_p)}\Omega_q.
\end{align*}
This implies that $P_\gamma\big(\E(b_p)\big)$ is the least-volume ellipsoid containing $B_q$, there\-fore $\E\big((P_\gamma^{-1})^*b_p\big)=\E(b_q)$ and hence $P_\gamma^*(g_L)_q=(g_L)_p$. Thus, by Lemma \ref{smooth}, $g_L$ is smooth, so it is indeed a Riemannian metric on $M$. It is clear from the construction that the Levi-Civita derivative for $g_L$ is the given covariant derivative $D$.

We note finally that \emph{if the holonomy group of $D$ is irreducible at a point of $M$}, then $g_L$ is proportional to the Riemannian metric $g_M$ constructed above. In general, if $g_1$ and $g_2$ are two Riemannian metrics on $M$ such that $Dg_1=Dg_2=0$, then -- under the irreducibility of the holonomy group of $D$ -- we have $g_2=\lambda g_1$, where $\lambda$ is a positive real number.

\end{document}